\documentclass[10pt]{amsart}
\allowdisplaybreaks

\usepackage{fourier}
\usepackage[utf8]{inputenc}
\usepackage[T1]{fontenc}

\usepackage{amsmath,amssymb,amsthm}
\usepackage{mathtools}
\mathtoolsset{mathic=true}
\usepackage{enumerate}
\usepackage{srcltx}
\usepackage{cite}
\usepackage{xspace}

\usepackage{tikz}
\usetikzlibrary{decorations.pathreplacing}
\usetikzlibrary{patterns}
\usetikzlibrary{shapes.misc}
\tikzset{>=latex}
\tikzset{cross/.style={cross out, draw=black, minimum size=2*(#1-\pgflinewidth), inner sep=0pt, outer sep=0pt},
%default radius will be 1pt.
  cross/.default={1pt}}

\usepackage{bookmark}
\usepackage{hyperref}

\usepackage{subcaption}
\captionsetup[subfigure]{labelfont=rm}

\usepackage{graphicx}

\usepackage{xcolor}

\newtheorem{theorem}{Theorem}[section]

\newtheorem*{theorem-nn}{Theorem}
\newtheorem{corollary}[theorem]{Corollary}
\newtheorem{lemma}[theorem]{Lemma}
\newtheorem{proposition}[theorem]{Proposition}

\newtheorem*{question-nn}{Question}

\newtheorem*{conjecture-nn}{Conjecture}
\theoremstyle{definition}
\newtheorem{definition}[theorem]{Definition}
\newtheorem*{definition-nn}{Definition}
\theoremstyle{remark}
\newtheorem*{claim*}{Claim}

\newtheorem{remark}[theorem]{Remark}
\newtheorem{example}[theorem]{Example}
\newtheorem*{example-nn}{Example}

\newcommand{\acts}{\curvearrowright}
\newcommand{\dist}{\mathrm{dist}}
\newcommand{\diam}{\mathrm{diam}}

\newcommand{\norm}[1]{\left\lVert #1\right\rVert}

\DeclareMathOperator*{\inter}{\mathrm{int}}

\begin{document}
\title{Katok's special representation theorem \\ for multidimensional Borel flows}

\author{Konstantin Slutsky}
\address{Department of Mathematics \\
  Iowa State University \\
  411 Morrill Road \\
  Ames, IA 50011}
\email{kslutsky@gmail.com}
\thanks{The author was partially supported by NSF grant DMS-2153981}

\begin{abstract}
  Katok's special representation theorem states that any free ergodic measure-preserving
  \(\mathbb{R}^{d}\)-flow can be realized as a special flow over a \(\mathbb{Z}^{d}\)-action.  It
  provides a multidimensional generalization of the ``flow under a function'' construction.  We
  prove the analog of Katok's theorem in the framework of Borel dynamics and show that, likewise,
  all free Borel \(\mathbb{R}^{d}\)-flows emerge from \(\mathbb{Z}^{d}\)-actions through the special
  flow construction using bi-Lipschitz cocycles.
\end{abstract}

\maketitle
%%%%%%%%%%

\section{Introduction}
\label{sec:introduction}

\subsection{Overview}
\label{sec:overview}

Theorems of Ambrose and
Kakutani~\cite{ambroseRepresentationErgodicFlows1941,ambroseStructureContinuityMeasurable1942}
established a connection between measure-preserving \(\mathbb{Z}\)-actions and \(\mathbb{R}\)-flows
by showing that any flow admits a cross-section and can be represented as a ``flow under a
function''. Their construction provides a foundation for the theory of Kakutani equivalence (also
called monotone
equivalence)~\cite{feldmanNewAutomorphismsProblem1976,katokMonotoneEquivalenceErgodic1977} on the
one hand and study of the possible ceiling functions in the ``flow under a function''
representation~\cite{rudolphTwovaluedStepCoding1976,krengelRudolphRepresentationAperiodic1976} on
the other.

The intuitive geometric picture of a ``flow under a function'' does not generalize to
\(\mathbb{R}^{d}\)-flows for \(d \ge 2\).  However,
Katok~\cite{katokSpecialRepresentationTheorem1977} re-interpreted it in a way that can readily be
adapted to the multidimensional setup, calling flows appearing in this construction \emph{special
  flows}. Despite their name, they aren't so special, since, as showed in the same paper, every free
ergodic measure-preserving \(\mathbb{R}^{d}\)-flow is metrically isomorphic to a special flow. Just
like the works of Ambrose and Kakutani, it opened gates for the study of multidimensional concepts
of Kakutani equivalence~\cite{deljuncoKakutaniEquivalenceErgodic1984} and stimulated research on
tilings of flows\cite{rudolphRectangularTilingsMathbbR1988,kraRudolphTwoStep2015}.

Borel dynamics as a separate field goes back to the work of Weiss~\cite{weissMeasurableDynamics1984}
and has blossomed into a versatile branch of dynamical systems.  The phase space here is a standard
Borel space \((X, \mathcal{B})\), i.e., a set \(X\) with a \(\sigma\)-algebra \(\mathcal{B}\) of
Borel sets for some Polish topology on \(X\).  Some of the key ergodic theoretical results have
their counterparts in Borel dynamics, while others do not generalize. For example, Borel version of
the Ambrose--Kakutani Theorem on the existence of cross-sections in \(\mathbb{R}\)-flows was proved
by Wagh in~\cite{waghDescriptiveVersionAmbrose1988} showing that, just like in ergodic theory, all
free Borel \(\mathbb{R}\)-flows emerge as ``flows under a function'' over Borel
\(\mathbb{Z}\)-actions.  Likewise, Rudolph's two-valued
theorem~\cite{rudolphTwovaluedStepCoding1976} generalizes to the Borel
framework~\cite{slutskyRegularCrossSections2019}. The theory of Kakutani equivalence, on the other
hand, exhibits a different phenomenon.  While being a highly non-trivial equivalence relation among
measure-preserving
flows~\cite{ornsteinEquivalenceMeasurePreserving1982,gerberAnticlassificationResultsKakutani2021},
descriptive set theoretical version of Kakutani equivalence collapses
entirely~\cite{millerDescriptiveKakutaniEquivalence2010}.

Considerable work has been done to understand the Borel dynamics of \(\mathbb{R}\)-flows, but
relatively few things are known about multidimensional actions.  This paper makes a contribution in
this direction by showing that the analog of Katok's special representation theorem does hold for
free Borel \(\mathbb{R}^{d}\)-flows.

\subsection{Structure of the paper}
\label{sec:structure-paper}

Constructions of orbit equivalent \(\mathbb{R}^{d}\)-actions often rely on (essential)
hyperfiniteness and use covers of orbits of the flow by coherent and exhaustive regions.  This is
the case for the aforementioned paper of Katok~\cite{katokSpecialRepresentationTheorem1977}, and
related approaches have been used in the descriptive set theoretical setup as well
(e.g.,~\cite{slutskySmoothOrbitEquivalence2021}). Particular assumptions on such coherent regions,
however, depend on the specific application.  Section~\ref{sec:seq-partial-actions} distills a
general language of partial actions, in which many of the aforementioned constructions can be
formulated.  As an application we show that the orbit equivalence relation generated by a free
\(\mathbb{R}\)-flow can also be generated by a free action of \emph{any} non-discrete and
non-compact Polish group (see Theorem \ref{thm:flow-generated-by-any-Polish-group}).  This is in a
striking contrast with the actions of discrete groups, where a probability measure-preserving free
\(\mathbb{Z}\)-action can be generated only by a free action of an amenable group.

Section~\ref{sec:lipsch-maps} does the technical work of constructing Lipschitz maps that are needed
for Theorem~\ref{thm:grid-flow}, which shows, roughly speaking, that up to an arbitrarily small
bi-Lipschitz perturbation, any free \(\mathbb{R}^{d}\)-flow admits an integer grid---a Borel
cross-section invariant under the \(\mathbb{Z}^{d}\)-action.

Finally, Section~\ref{sec:spec-repr-theor} discusses the descriptive set theoretical version of
Katok's special flow construction and shows in Theorem~\ref{thm:katoks-special-representation-flows}
that, indeed, any free \(\mathbb{R}^{d}\)-flow can be represented as a special flow generated by a
bi-Lipschitz cocycle with Lipschitz constants arbitrarily close to \(1\).  This provides a Borel
version of Katok's special representation theorem.

%%% Local Variables:
%%% mode: latex
%%% TeX-master: "../Katok-representation-theorem-for-Borel-flows"
%%% End:

\section{Sequences of partial actions}
\label{sec:seq-partial-actions}

We begin by discussing the framework of partial actions suitable for constructing orbit equivalent
actions. Throughout this section, \(X\) denotes a standard Borel space.

\subsection{Partial actions}
\label{sec:part-acti-tess}
Let \(G\) be a standard Borel group, that is a group with a structure of a standard Borel space that
makes group operations Borel. A \textbf{partial \(G\)-action}\footnote{More precisely, we should
  call such \((E, \phi)\) a partial \emph{free} action.  Since we are mainly concerned with free
  actions in what follows, we choose to omit the adjective ``free'' in the definition.} is a pair
\((E, \phi)\), where \(E\) is a Borel equivalence relation on \(X\) and \(\phi : X \to G\) is a
Borel map that is injective on each \(E\)-class: \(\phi(x) \ne \phi(y)\) whenever \(x E y\).  The
map \(\phi\) itself may occasionally be refer to as a partial action when the equivalence relation
is clear from the context.

The motivation for the name comes from the following observation.  Consider the set
\[A_{\phi} = \bigl\{(g,x, y) \in G \times X \times X : x E y \textrm{ and } g\phi(x) =
  \phi(y)\bigr\}.\]
Injectivity of \(\phi\) on \(E\)-classes ensures that for each \(x \in X\) and \(g \in G\) there is
at most one \(y \in X\) such that \((g,x,y) \in A_{\phi}\).  When such a \(y\) exists, we say that
the action of \(g\) on \(x\) is defined and set \(gx = y\).  Clearly, \((e,x, x) \in A_{\phi}\) for
all \(x \in X\), thus \(ex=x\); also \(g_{2}(g_{1}x) = (g_{2}g_{1})x\) whenever all the terms are
defined.  The set \(A_{\phi}\) is a graph of a total action \(G \acts X\) if and only if for each
\(x \in X\) and \(g \in G\) there does exist some \(y \in X\) such that \((g,x,y) \in A_{\phi}\); in
this case the orbit equivalence relation generated by the action coincides with \(E\).

\begin{example}
  \label{exmpl:restriction-of-total-action}
  An easy way of getting a partial action is by restricting a total one.  Suppose we have a free
  Borel action \(G \acts X\) with the corresponding orbit equivalence relation \(E_{G}\) and suppose
  that a Borel equivalence sub-relation \(E \subseteq E_{G}\) admits a Borel selector---a Borel
  \(E\)-invariant map \(\pi : X \to X\) such that \(x E \pi(x)\) for all \(x \in X\). If
  \(\phi : X \to G\) is the map specified uniquely by the condition \(\phi(x) \pi(x) = x\), then
  \((E, \phi)\) is a partial \(G\)-action.
\end{example}

\medskip

Sub-relations \(E\) as in Example~\ref{exmpl:restriction-of-total-action} are often associated with
cross-sections of actions of locally compact second countable (lcsc) groups.

\subsection{Tessellations of lcsc group actions}
\label{sec:tess-locally-comp}
Consider a free Borel action \(G \acts X\) of a locally compact second countable group.  A
\textbf{cross-section} of the action is a Borel set \(\mathcal{C} \subseteq X\) that intersects
every orbit in a countable non-empty set.  A cross-section \(\mathcal{C} \subseteq X\) is
\begin{itemize}
\item \textbf{discrete} if \((Kx) \cap \mathcal{C}\) is finite for every \(x \in X\) and compact \(K
  \subseteq G\);
\item \textbf{\(U\)-lacunary}, where \(U \subseteq G\) is a neighborhood of the identity, if
  \(Uc \cap \mathcal{C} = \{c\}\) for all \(c \in \mathcal{C}\);
\item \textbf{lacunary} if it is \(U\)-lacunary for some neighborhood of the identity \(U\);
\item \textbf{cocompact} if \(K\mathcal{C} = X\) for some compact \(K \subseteq G\).
\end{itemize}
Let \(\mathcal{C}\) be a lacunary cross-section for \(G \acts X\), which exists
by~\cite[Corollary~1.2]{kechrisCountableSectionsLocally1992}.  Any lcsc group \(G\) admits a
compatible left-invariant proper metric~\cite{strubleMetricsLocalLycompact1974}, and any
left-invariant metric \(d\) can be transferred to orbits due to freeness of the action via
\(\dist(x,y) = d(g,e)\) for the unique \(g \in G\) such that \(gx = y\).  One can now define the
so-called Voronoi tessellation of orbits by associating with each \(x \in X\) the closest point
\(\pi_{\mathcal{C}}(x) \in \mathcal{C}\) of the cross-section \(\mathcal{C}\) as determined by
\(\dist\).  Properness of the metric ensures that, for a ball \(B_{R} \subseteq G\) of radius \(R\),
\(B_{R} = \{g \in G : d(g,e) \le R\}\), and any \(x \in X\), the set \(\mathcal{C} \cap B_{R}x\) is
finite.  Indeed, there can be at most \(\lambda(B_{R +r})/ \lambda(B_{r})\) points in the
intersection, where \(\lambda\) is a Haar measure on the group and \(r > 0\) is so small that
\(B_{r}c \cap B_{r} c' = \varnothing\) whenever \(c, c' \in \mathcal{C}\) are distinct.

Small care needs to be taken to address the possibility of having several closest
points.  For example, one may pick a Borel linear order on \(\mathcal{C}\) and associated each \(x\)
with the \emph{smallest} closest point in the cross-section
(see~\cite[Section~4]{slutskyLebesgueOrbitEquivalence2017}
or~\cite[Section~B.2]{maitreMathrmLFullGroups2021} for the specifics). This way we get a Borel
equivalence relation \(E_{\mathcal{C}} \subseteq E_{G}\) whose equivalence classes are the cells of
the Voronoi tessellation: \(x E_{\mathcal{C}} y\) if and only if
\(\pi_{\mathcal{C}}(x) = \pi_{\mathcal{C}}(y)\).

Assumed freeness of the action \(G \acts X\) allows for a natural identification of each Voronoi
cell with a subset of the acting group via the map
\(\pi_{\mathcal{C}}^{-1}(c) \ni x \mapsto \phi_{\mathcal{C}}(x) \in G\) such that
\(\phi_{\mathcal{C}}(x)c = x\), which is exactly what the corresponding partial action from
Example~\ref{exmpl:restriction-of-total-action} does.

\medskip

Our intention is to use partial actions to define total actions, and the example above may seem like
going ``in the wrong direction''.  The point, however, is that once we have a partial action
\(\phi : X \to G\), we can compose it with an arbitrary Borel injection \(f : G \to G\) to get a
different partial action \(f\circ \phi\).  This pattern is typical in the sense that new partial
actions are often constructed by modifying those obtained as restrictions of total actions.

\subsection{Convergent sequences of partial actions}
\label{sec:sequ-part-acti}

A total action can be defined whenever we have a sequence of partial actions that cohere in the
appropriate sense. Let \(G\) be a standard Borel group. A sequence \((E_{n}, \phi_{n})\),
\(n \in \mathbb{N}\), of partial \(G\)-actions on \(X\) is said to be \textbf{convergent} if it
satisfies the following properties:
\begin{itemize}
\item \textbf{monotonicity:} equivalence relations \(E_{n}\) form an increasing sequence, that is
  \(E_{n} \subseteq E_{n+1}\) for all \(n\);
\item \textbf{coherence:} for each \(n\) the map \(x \mapsto (\phi_{n}(x))^{-1}\phi_{n+1}(x)\) is
  \(E_{n}\)-invariant;
\item \textbf{exhaustiveness:} for all \(x \in X\) and all \(g \in G\) there exist \(n\) and
  \(y \in X\) such that \(xE_{n}y\) and \(g\phi_{n}(x) = \phi_{n}(y)\).
\end{itemize}
With such a sequence one can associate a free Borel (left) action \(G \acts X\), called the
\textbf{limit} of \((E_{n}, \phi_{n})_{n}\), whose graph is \(\bigcup_{n}A_{\phi_{n}}\).  Coherence
ensures that the partial action defined by \(\phi_{n+1}\) is an extension of the one given by
\(\phi_{n}\).  Indeed, if \(xE_{n}y\) are such that \(g\phi_{n}(x) = \phi_{n}(y)\), then also
\(x E_{n+1} y\) by monotonicity and, using coherence,
\begin{displaymath}
  g \phi_{n+1}(x) = g \phi_{n}(x)(\phi_{n}(x))^{-1}\phi_{n+1}(x) = \phi_{n}(y)
  (\phi_{n}(y))^{-1}\phi_{n+1}(y) = \phi_{n+1}(y),
\end{displaymath}
whence \(A_{\phi_{n}} \subseteq A_{\phi_{n+1}}\).  If \(C\) is an \(E_{n}\)-class,
and \(s = (\phi_{n}(x))^{-1}\phi_{n+1}(x)\) for some \(x \in C\), then \(\phi_{n+1}(C) = \phi_{n}(C)s\), so the image
\(\phi_{n}(C)\) gets shifted on the right inside \(\phi_{n+1}(C)\).  If we want to build a
right action of the group, then \(\phi_{n}(C)\) should be shifted on the left instead.

Finally, exhaustiveness guarantees that \(gx\) gets defined eventually: for all \(g \in G\) and
\(x \in X\) there are \(n\) and \(y \in X\) such that \((g,x,y) \in A_{\phi_{n}}\). It is
straightforward to check that \(\bigcup_{n}A_{\phi_{n}}\) is a graph of a total Borel
action \(G \acts X\).  Equally easy is to check that the action is free, and its orbits are precisely
the equivalence classes of \(\bigcup_{n}E_{n}\).

\medskip

This framework, general as it is, delegates most of the complexity to the construction of maps
\(\phi_{n}\). Let us illustrate these concepts on essentially hyperfinite actions of lcsc groups.

\subsection{Hyperfinite tessellations of lcsc group actions}
\label{sec:hyperf-tess-lcsc}
In the context of Section~\ref{sec:tess-locally-comp}, suppose that, furthermore, the restriction of
the orbit equivalence relation \(E_{G}\) onto the cross-section \(\mathcal{C}\) is hyperfinite,
i.e., there is an increasing sequence of finite Borel equivalence relations \(F_{n}\) on
\(\mathcal{C}\) such that \(\bigcup_{n}F_{n} = E_{G}|_{\mathcal{C}}\).  We can use this sequence to
define \(xE_{n}y\) whenever \(\pi_{\mathcal{C}}(x)F_{n}\pi_{\mathcal{C}}(y)\), which yields an
increasing sequence of Borel equivalence relations \(E_{n}\) such that \(E_{G} = \bigcup_{n}E_{n}\).

The equivalence relations \(F_{n}\) admit Borel transversals, i.e., there are Borel sets
\(\mathcal{C}_{n}\) that pick exactly one point from each \(F_{n}\)-class.  Just as in
Section~\ref{sec:tess-locally-comp}, we may define \(\phi_{n}(x) \) to be such an element
\(g \in G\) that \(gc = x\) for the unique \(c \in \mathcal{C}_{n}\) satisfying \(x E_{n}c\).  This
gives a convergent sequence of partial \(G\)-actions \((E_{n}, \phi_{n})_{n}\) whose limit is the
original action \(G \acts X\).

\subsection{Partial actions revisited}
\label{sec:part-acti-part}

In practice, it is often more convenient to allow equivalence relations \(E_{n}\) to be defined on
proper subsets of \(X\).  Let \(X_{n} \subseteq X\), \(n \in \mathbb{N}\), be Borel subsets, and
suppose for each \(n\), \(E_{n}\) is a Borel equivalence relation on \(X_{n}\).  We say that the
sequence \((E_{n})_{n}\) is \textbf{monotone} if the following conditions are satisfied for all
\(m \le n\):
\begin{itemize}
\item \(E_{m}|_{X_{m} \cap X_{n}} \subseteq E_{n}|_{X_{m} \cap X_{n}}\);
\item if \(x \in X_{m}\cap X_{n}\) then the whole \(E_{m}\)-class of \(x\) is in \(X_{n}\).
\end{itemize}

Partial action maps \(\phi_{n} : X_{n} \to G\), where, as earlier, \(G\) is a standard Borel group,
need to satisfy the appropriate versions of coherence and exhaustiveness:
\begin{itemize}
\item \textbf{coherence}: \(X_{m} \cap X_{n} \ni x \mapsto (\phi_{m}(x))^{-1}\phi_{n}(x)\) is
\(E_{m}\)-invariant for each \(m < n\);
\item \textbf{exhaustiveness}: for each \(x \in X\) and \(g \in G\) there exist \(n\) and
  \(y \in X_{n}\) such that \(x \in X_{n}\), \(x E_{n}y\), and \(g\phi_{n}(x) = \phi_{n}(y) \).
\end{itemize}
A sequence of partial \(G\)-actions \((X_{n},E_{n},\phi_{n})_{n}\) will be called
\textbf{convergent} if it satisfies the above properties of monotonicity, coherence, and
exhaustiveness.  Note that the condition \(\bigcup_{n}X_{n} = X\) follows from exhaustiveness, so
sets \(X_{n}\) must cover all of \(X\).

Convergent sequences \((X_{n},E_{n},\phi_{n})_{n}\) define total actions, which can be most easily
seen by reducing this setup to the notationally simpler one given in
Section~\ref{sec:sequ-part-acti}. To this end, extend \(E_{n}\) to the equivalence relation
\(\hat{E}_{n}\) on all of \(X\) by
\[x\hat{E}_{n}y \iff \exists m \le n\ xE_{m}y \textrm { or } x = y;\]
and also extend \(\phi_{n}\) to \(\hat{\phi}_{n} : X \to G\) by setting
\(\hat{\phi}_{n}(x) = \phi_{m}(x)\) for the maximal \(m \le n\) such that \(x \in X_{m}\) or
\(\hat{\phi}_{n}(x) = e\) if no such \(m\) exists.  It is straightforward to check that
\((\hat{E}_{n}, \hat{\phi}_{n})_{n}\) is a convergent sequence of partial \(G\)-actions in the sense
of Section~\ref{sec:sequ-part-acti}.  By the \textbf{limit} of the sequence of partial actions
\((X_{n}, E_{n}, \phi_{n})_{n}\) we mean the limit of \((\hat{E}_{n}, \hat{\phi}_{n})_{n}\) as
defined earlier.

\begin{remark}
  \label{rem:layered-variant-partial-actions}
  A variant of this generalized formulation, which we encounter in
  Proposition~\ref{prop:chain-bers-flow} below, occurs when sets \(X_{n}\) are nested:
  \(X_{0} \subseteq X_{1} \subseteq X_{2} \subseteq \cdots\). Monotonicity of equivalence relations
  then simplifies to \(E_{0} \subseteq E_{1} \subseteq E_{2} \subseteq \cdots \) and coherence
  becomes equivalent to the \(E_{n}\)-invariant of maps
  \( X_{n} \ni x \mapsto (\phi_{n}(x))^{-1}\phi_{n+1}(x) \in G\).
\end{remark}

\medskip

As was mentioned above, it is easy to create new partial actions simply by composing a partial
action \(\phi : X \to G\) with some Borel bijection \(f : G \to G\) (or \(f : G \to H\) if we choose
to have values in a different group).  However, an arbitrary bijection has no reasons to preserve
coherence and extra care is necessary to maintain it.

Furthermore, in general we need to apply different modifications \(f\) to different
\(E_{n}\)-classes, which naturally raises concern of how to ensure that construction is performed in
a Borel way.  In applications, the modification \(f\) applied to an \(E_{n}\)-class \(C\), usually
depends on the ``shape'' of \(C\) and the \(E_{m}\)-classes it contains, but does not depend on
other \(E_{n}\)-classes. If there are only countably many such ``configurations'' of
\(E_{n}\)-classes, resulting partial actions \(f \circ \phi\) will be Borel as long as we
consistently apply the same modification whenever ``configurations'' are the same. This idea can be
formalized as follows.

\subsection{Rational sequences of partial actions}
\label{sec:rati-part-acti}

Let \((E_{n}, \phi_{n})_{n}\) be a convergent sequence of partial actions on \(X\). For an
\(E_{n}\)-class \(C\), let \(\mathcal{E}_{m}(C)\) denote the collection of \(E_{m}\)-classes
contained in \(C\).  Given two \(E_{n}\)-classes \(C\) and \(C'\), we denote by
\(\phi_{n}(C) \equiv \phi_{n}(C')\) the existence for each \(m \le n\) of a bijection
\(\mathcal{E}_{m}(C) \ni D \mapsto D' \in \mathcal{E}_{m}(C') \) such that
\(\phi_{n}(D) = \phi_{n}(D')\) for all \(D \in \mathcal{E}_{m}(C)\).  Collection of images
\(\{\phi_{n}(D) : D \in \bigcup_{m\le n} \mathcal{E}_{m}(C) \}\) constitutes the ``configuration'' of
\(C\) referred to earlier.

We say that the sequence \((E_{n}, \phi_{n})_{n}\) of partial actions is \textbf{rational} if for
each \(n\) there exists a Borel \(E_{n}\)-invariant partition \(X = \bigsqcup_{k} Y_{k}\) such that
for each \(k\) one has \(\phi_{n}(C) \equiv \phi_{n}(C')\) for all \(E_{n}\)-classes
\(C,C' \subseteq Y_{k}\).

\begin{remark}
  \label{rem:rational-partial-actions}
  This concept of rationality applies verbatim to convergent sequences of partial actions
  \((X_{n},E_{n}, \phi_{n})_{n}\) as described in Section~\ref{sec:part-acti-part}. One can check
  that such a sequence is rational if and only if the sequence \((\hat{E}_{n},\hat{\phi}_{n})\) is
  rational.
\end{remark}

\subsection{Generating the flow equivalence relation}
\label{sec:free-acti-flow}

As an application of the partial actions formalism, we show that any orbit equivalence relation
given by a free Borel \(\mathbb{R}\)-flow can also be generated by a free action of any non-discrete
and non-compact Polish group.  For this we need the following representation of an
\(\mathbb{R}\)-flow as a limit of partial \(\mathbb{R}\)-actions.

\begin{proposition}
  \label{prop:chain-bers-flow}
  Any free Borel \(\mathbb{R}\)-flow on \(X\) can be represented as a limit of a convergent rational
  sequence of partial \(\mathbb{R}\)-actions \((X_{n},E_{n}, \phi_{n})_{n}\) such that
  \begin{enumerate}
  \item both \(X_{n}\) and \(E_{n}\) are increasing: \(X_{0} \subseteq X_{1} \subseteq \cdots\) and
    \(E_{0} \subseteq E_{1} \subseteq \cdots\); (see Remark~\ref{rem:layered-variant-partial-actions})
  \item\label{item:finitely-many-subclasses} each \(E_{n+1}\)-class contains finitely many
    \(E_{n}\)-classes;
  \item\label{item:uncountable-classes} each \(E_{0}\)-class has cardinality of continuum;
  \item\label{item:uncountable-complement} for each \(E_{n+1}\)-class \(C\) the set
    \(C \setminus X_{n}\) has cardinality of continuum.
  \end{enumerate}
\end{proposition}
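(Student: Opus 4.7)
The plan is to manufacture the partial actions from a decreasing Borel sequence of ``markers'' inside a regular cross-section, taking each $E_n$-class to be the piece of an orbit lying between two consecutive level-$n$ markers with the initial $\mathcal{C}$-gap deleted; the deleted gaps will furnish the continuum-sized remainders required by condition~(\ref{item:uncountable-complement}).

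First I would invoke the regular cross-section theorem of \cite{slutskyRegularCrossSections2019} to obtain a lacunary cocompact Borel cross-section $\mathcal{C} \subseteq X$ whose successive gaps in orbit time take only two values $a < b$. The first-return map on $\mathcal{C}$ is a free hyperfinite Borel $\mathbb{Z}$-action, and Weiss's marker lemma then supplies a decreasing sequence of complete Borel sections $\mathcal{C} \supseteq M_{0} \supseteq M_{1} \supseteq \cdots$ of $\mathcal{C}$ with $\bigcap_{n} M_{n} = \varnothing$; I would arrange along the way that each $M_{0}$-block contains at least two $\mathcal{C}$-points, each $M_{n+1}$-block contains at least two $M_{n}$-points, and the distances between consecutive $M_{n}$-points along each orbit tend to infinity.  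Put
\[
  A_{n} := \bigcup_{c \in M_{n}} [c, \mathrm{next}_{\mathcal{C}}(c)), \qquad X_{n} := X \setminus A_{n},
\]
and declare the $E_{n}$-class of $x \in X_{n}$ to be the maximal connected interval of $X_{n}$ in the orbit of $x$, namely $[\mathrm{next}_{\mathcal{C}}(d), d')$ where $d \le x < d'$ are the two consecutive $M_{n}$-points bracketing $x$.  Let $\phi_{n}(x)$ be the orbit-time from the leftmost point of the $E_{n}$-class of $x$ to $x$.

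Monotonicity $X_{n} \subseteq X_{n+1}$ and $E_{n} \subseteq E_{n+1}|_{X_{n}}$ is immediate from $M_{n+1} \subseteq M_{n}$. Each $E_{n+1}$-class $C$ is split into finitely many $E_{n}$-classes by the $M_{n}$-markers inside its $M_{n+1}$-block, giving~(\ref{item:finitely-many-subclasses}). An $E_{0}$-class contains at least one whole $\mathcal{C}$-gap, hence has cardinality of the continuum, giving~(\ref{item:uncountable-classes}); and $C \setminus X_{n}$ is precisely the union of the $\mathcal{C}$-gaps immediately following the strictly interior $M_{n}$-markers of $C$'s $M_{n+1}$-block, of which there is at least one by construction, giving~(\ref{item:uncountable-complement}). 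Coherence reduces to the identity $\phi_{n+1}(x) - \phi_{n}(x) = \min(E_{n}\text{-class of } x) - \min(E_{n+1}\text{-class of } x)$, which visibly depends only on the $E_{n}$-class of $x$.  Exhaustiveness follows because $\bigcap_{n} M_{n} = \varnothing$ places every $x$ in $X_{n}$ for some $n$, while the divergence of the $M_{n}$-gaps eventually places $x$ and $x+g$ into the same $E_{n}$-class.

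The delicate ingredient is rationality.  Thanks to the regularity of $\mathcal{C}$, the combinatorial ``type'' of an $E_{n}$-class $C$---the finite word recording the $\mathcal{C}$-gap lengths in $\{a,b\}$ traversed by $C$ together with the positions of the markers $M_{m}$ for each $m \le n$ inside $C$---takes only countably many values, and two classes of the same type have identical $\phi_{n}$-images and identical $E_{m}$-sub-class structures for every $m \le n$.  Partitioning $X$ into Borel pieces $Y_{k}$ indexed by this type (absorbing $X \setminus X_{n}$ into a single extra piece) yields an $\hat{E}_{n}$-invariant partition witnessing rationality of $(\hat{E}_{n}, \hat{\phi}_{n})_{n}$, equivalently of $(X_{n},E_{n},\phi_{n})_{n}$ by Remark~\ref{rem:rational-partial-actions}.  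The main technical obstacle I anticipate is arranging the marker sequence $(M_{n})$ so that nesting, emptiness of the intersection, the ``at least two sub-markers'' requirement, and the divergence of gaps all hold simultaneously in a Borel way; once the markers are in place, the remaining verifications reduce to bookkeeping.
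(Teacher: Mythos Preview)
Your argument is correct.  The overall strategy matches the paper's---build a hyperfinite tower over a well-chosen cross-section, take \(\phi_{n}\) to be the restriction of the flow's orbit-time map, and then engineer item~(\ref{item:uncountable-complement}) by deleting a small Borel piece from each \(E_{n}\)-class---but the implementation differs in two places.  First, the paper uses a \emph{rational} lacunary cross-section (gaps in \(\mathbb{Q}\)) and obtains rationality of the sequence from the countability of the possible gap-tuples; you instead invoke the Borel two-valued theorem to force gaps into \(\{a,b\}\), so the combinatorial type of an \(E_{n}\)-class is a finite word over a two-letter alphabet decorated with marker positions, again countable.  Second, the paper produces the continuum-sized remainder by shrinking every Voronoi cell \(D\) by a strictly decreasing amount \(\epsilon_{n}\) (so \(C\setminus X_{n}\) contains the thin collar \(D^{\epsilon_{n+1}}\setminus D^{\epsilon_{n}}\)), whereas you delete the first \(\mathcal{C}\)-gap after each level-\(n\) marker, so \(C\setminus X_{n}\) contains an entire \(\mathcal{C}\)-gap of length \(a\) or \(b\).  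Your deletion scheme has the mild advantage that \(\bigcup_{n}X_{n}=X\) automatically, so you avoid the paper's extra step of adjoining the residual set \(Y\) of Voronoi-cell boundary points.  The ``technical obstacle'' you flag---producing markers that are simultaneously nested, vanishing, at least doubly subdivided, and with diverging gaps---is indeed routine for a free \(\mathbb{Z}\)-action (iterating the marker lemma with gaps at least \(2^{n}\) suffices); note too that your use of gap divergence for exhaustiveness is not actually needed, since \(\bigcap_{n}M_{n}=\varnothing\) already forces the \(E_{n}\)-class of any point to exhaust the orbit.
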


\begin{proof}
  Any \(\mathbb{R}\)-flow admits a rational\footnote{Rationality of the cross-section here means
    that the distance between any two points of \(\mathcal{C}\) is a rational number. More
    generally, rationality of a cross-section \(\mathcal{C}\) for an \(\mathbb{R}^{d}\)-action means
    \(r \in \mathbb{Q}^{d}\) whenever \(c + r = c'\) for some \(c, c' \in \mathcal{C}\).}
  \((-4,4)\)-lacunary cross-section (see~\cite[Section~2]{slutskyTimeChangeEquivalence2019}), which
  we denote by \(\mathcal{C}\).  Let \((E_{\mathcal{C}}, \phi_{\mathcal{C}})\) be the partial
  \(\mathbb{R}\)-action as defined in Section~\ref{sec:tess-locally-comp}.  If \(D\) is an
  \(E_{\mathcal{C}}\)-class, then \(\phi_{\mathcal{C}}(D)\) is an interval. For \(\epsilon > 0\),
  let \(D^{\epsilon}\) consist of those \( x \in D\) such that \(\phi_{\mathcal{C}}(x)\) is at least
  \(\epsilon\) away from the boundary points of \(\phi_{\mathcal{C}}(D)\). In other words,
  \(D^{\epsilon}\) is obtained by shrinking the class \(D\) by \(\epsilon\) from each side.

  The restriction of the orbit equivalence relation onto \(\mathcal{C}\) is hyperfinite. This fact
  is true in the much wider generality of actions of locally compact Abelian groups
  \cite{cottonAbelianGroupActions2022}.  Specifically for \(\mathbb{R}\)-flows, \(E|_{\mathcal{C}}\)
  is generated by the first return map---a Borel automorphism of \(\mathcal{C}\) that sends a point
  in \(\mathcal{C}\) to the next one according to the order of the \(\mathbb{R}\)-flow.  The first
  return map is well defined and is invertible, except for the orbits, where \(\mathcal{C}\) happens
  to have the maximal or the minimal point.  The latter part of the space evidently admits a Borel
  selector and is therefore smooth, hence won't affect hyperfiniteness of the equivalence relation.
  It remains to recall the standard fact that orbit equivalence relations of \(\mathbb{Z}\)-actions
  are hyperfinite (see, for instance, \cite[Theorem~5.1]{doughertyStructureHyperfiniteBorel1994}),
  and thus so is the restriction \(E|_{\mathcal{C}}\).

  In particular, we can represent the \(\mathbb{R}\)-flow as the limit of a convergent
  sequence of partial actions \((E'_{n}, \phi'_{n})_{n}\) as described in
  Section~\ref{sec:hyperf-tess-lcsc}.  Note that \((E'_{n}, \phi'_{n})_{n}\) is necessarily rational
  by rationality of \(\mathcal{C}\). Such a sequence satisfies
  items~\eqref{item:finitely-many-subclasses} and~\eqref{item:uncountable-classes}, but
  fails~\eqref{item:uncountable-complement}.  We fix this by shrinking equivalence classes to
  achieve proper containment.  Let \((\epsilon_{n})_{n}\) be a strictly decreasing sequence of
  positive reals such that \( 1 > \epsilon_{0}\) and \(\lim_{n}\epsilon_{n} = 0\). Put
  \(X'_{n} = \bigcup D^{\epsilon_{n}}\), where the union is taken over all
  \(E_{\mathcal{C}}\)-classes \(D\). Note that sets \(X'_{n}\) fail to cover \(X\), because the
  boundary points of any \(E_{\mathcal{C}}\)-class do not belong to any of \(X'_{n}\). Put
  \(Y = X \setminus \bigcup_{n} X'_{n}\) and let \(X_{n} = X'_{n} \cup Y\).  Clearly,
  \((X_{n})_{n}\) is an increasing sequence of Borel sets and \(\bigcup_{n} X_{n} = X\).

  Finally, set \(E_{n} = E'_{n}|_{X_{n}}\) and \(\phi_{n} : X_{n} \to \mathbb{R}\) to be
  \(\phi'_{n}|_{X_{n}}\).  The sequence \((X_{n},E_{n},\phi_{n})_{n}\) of partial
  \(\mathbb{R}\)-actions satisfies the conditions of the proposition.
\end{proof}

All non-smooth orbit equivalence relations produced by free Borel \(\mathbb{R}\)-flows are Borel
isomorphic to each other~\cite[Theorem~3]{kechrisCountableSectionsLocally1994}.
Theorem~\ref{thm:flow-generated-by-any-Polish-group} will show that this orbit equivalence relation
can also be generated by a free action of any non-compact and non-discrete Polish group.

Let \(G\) be a group. We say that a set \(A \subseteq G\) \textbf{admits infinitely many disjoint
  right translates} if there is a sequence \((g_{n})_{n}\) of elements of \(G\) such that
\(Ag_{m} \cap Ag_{n} = \emptyset\) for all \(m \ne n\).
\begin{lemma}
  \label{lem:disjoint-translates-in-Polish-group}
  Let \(G\) be a non-compact Polish group.  There exists a neighborhood of the identity \(V
  \subseteq G\) such that for any finite \(F \subseteq G\) the set \(VF\) admits infinitely many
  disjoint right translates.
\end{lemma}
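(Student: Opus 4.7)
My plan is to specify a neighborhood \(V\) using the non-compactness of \(G\) and then produce the required sequence by a greedy inductive argument.

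For the setup I will invoke the standard characterization of compactness: a topological group is compact if and only if, for every open neighborhood of the identity, \(G\) is covered by finitely many of its right translates. Since \(G\) is non-compact, this furnishes an open symmetric neighborhood \(U \ni e\) such that no finite family of right translates of \(U\) covers \(G\). Using continuity of \((x, y) \mapsto x^{-1}y\) at \((e, e)\), I pick an open symmetric neighborhood \(V \ni e\) with \(V^{-1}V \subseteq U\); this \(V\) will be the candidate.

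Given a finite \(F \subseteq G\), I set \(A_F := (VF)^{-1}(VF) = F^{-1}V^{-1}VF\). Two right translates \(VFg\) and \(VFg'\) are disjoint precisely when \(g(g')^{-1} \notin A_F\), so an inductive construction of \((g_n)\) succeeds provided that at every step \(n+1\) the open set \(G \setminus \bigcup_{i \le n} A_F g_i\) is non-empty. The proof thus reduces to the following non-covering claim: \(G\) cannot be covered by finitely many right translates of \(A_F\), for any finite \(F\).

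Verifying this non-covering property is the main obstacle. One has \(A_F = \bigcup_{f_1, f_2 \in F} f_1^{-1}V^{-1}V f_2 \subseteq \bigcup_{f_1, f_2 \in F} f_1^{-1}U f_2\), a finite union of two-sided translates of a subset of \(U\). In the locally compact case one can additionally arrange \(\overline{V}\) to be compact (via Struble's proper-metric theorem), whence \(\overline{A_F}\) is compact and the desired non-covering is immediate from the non-compactness of \(G\). In the general Polish case the conjugates \(f^{-1}U f\) may be much larger than \(U\); there I would argue by contradiction, assuming \(G = \bigcup_{j} A_F h_j\) for finitely many \(h_j\), and then grouping the resulting cover \(G = \bigcup_{f_1 \in F} f_1^{-1} T_{f_1}\) with \(T_{f_1} = \bigcup_{f_2, j} U (f_2 h_j)\). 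Using that each map \(x \mapsto f_1^{-1} x\) is a global bijection on \(G\), one should be able to unwind this into a finite cover of \(G\) by right translates of \(U\) alone, contradicting the choice of \(U\). Carrying this combinatorial unwinding through cleanly in the absence of SIN-type invariance of \(U\) is where I expect the technical heart of the proof to lie.
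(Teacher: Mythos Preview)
Your overall architecture matches the paper's: choose a symmetric \(V\) with \(V^{2}\subseteq U\) for a suitable neighborhood \(U\), then build \((g_{n})_{n}\) greedily so that \(g_{n}\) avoids \(F^{-1}V^{-1}VF\{g_{k}:k<n\}\). The divergence is in how \(U\) is produced, and that is where your argument has a genuine gap.

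You take \(U\) from the elementary ``no finite \emph{right}-translate cover'' characterization of non-compactness. But the set you must avoid at each stage is contained in \(F^{-1}UF\{g_{k}:k<n\}\), a finite union of \emph{two-sided} translates of \(U\); you need \(F_{1}UF_{2}\ne G\) for all finite \(F_{1},F_{2}\). Your proposed ``unwinding'' does not bridge this. From \(G=\bigcup_{f_{1},f_{2},j}f_{1}^{-1}Uf_{2}h_{j}\) one gets, for every \(g\), that \(Fg\) meets the finite right-translate union \(S=\bigcup_{f_{2},j}Uf_{2}h_{j}\); equivalently \(G=F^{-1}S\). This is still a finite union of two-sided translates of \(U\), so you are back where you started. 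Left-multiplying by any \(f\in F\) just permutes the problem; no inductive reduction to right translates alone emerges. Without a SIN-type hypothesis there is no reason the conjugates \(f^{-1}Uf\) are controlled by right translates of \(U\), so the one-sided hypothesis on \(U\) is genuinely too weak.

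The paper sidesteps this entirely by invoking the Solecki--Uspenskij characterization: a Polish group \(G\) is non-compact if and only if some neighborhood \(U\) of the identity satisfies \(F_{1}UF_{2}\ne G\) for all finite \(F_{1},F_{2}\subseteq G\). With that \(U\) in hand, the greedy step is immediate: set \(F_{1}=F^{-1}\), \(F_{2,n}=F\{g_{k}:k<n\}\), and pick \(g_{n}\notin F_{1}UF_{2,n}\). Your locally compact shortcut (choosing \(V\) with compact closure) is fine in that regime, but in the general Polish case you either need to cite Solecki--Uspenskij or reprove it; the ``combinatorial unwinding'' you outline does not do the job.
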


\begin{proof}
  We begin with the following characterization of compactness established independently by
  Solecki~\cite[Lemma~1.2]{soleckiActionsNoncompactNonlocally2000} and
  Uspenskij~\cite{uspenskijSubgroupsMinimalTopological2008}: a Polish group \(G\) is non-compact if
  and only if there exists a neighborhood of the identity \(U \subseteq G\) such that
  \(F_{1}UF_{2} \ne G\) for all finite \(F_{1}, F_{2} \subseteq G\).  Let \(V \subseteq G\) be a
  symmetric neighborhood of the identity such that \(V^{2} \subseteq U\).  We claim that such a set
  \(V\) has the desired property.  Pick a finite \(F \subseteq G\), set \(g_{0} = e\) and choose
  \(g_{n}\) inductively as follows. Let \(F_{1} = F^{-1}\) and
  \(F_{2,n} = F\cdot\{g_{k} : k < n\}\).  The defining property of \(U\) assures existence of
  \(g_{n} \not \in F_{1}UF_{2,n}\).  Translates \((VFg_{n})_{n}\) are then pairwise disjoint, for if
  \(VFg_{m} \cap VFg_{n} \ne \emptyset\) for \(m < n\), then
  \(g_{n} \in F^{-1}V^{-1}VFg_{m} \subseteq F_{1}UF_{2,n}\), contradicting the construction.
\end{proof}

\begin{theorem}
  \label{thm:flow-generated-by-any-Polish-group}
  Let \(E\) be an orbit equivalence relation given by a free Borel \(\mathbb{R}\)-flow on \(X\).
  Any non-discrete non-compact Polish group \(G\) admits a free Borel action \(G \acts X\) such that
  \(E_{G} = E\).
\end{theorem}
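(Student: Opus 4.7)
The plan is to invoke Proposition~\ref{prop:chain-bers-flow} to fix a convergent rational sequence $(X_n, E_n, \phi_n)_n$ of partial $\mathbb{R}$-actions representing the given flow, and then to build a convergent rational sequence of partial $G$-actions $(X_n, E_n, \psi_n)_n$ on the \emph{same} sets $X_n$ and equivalence relations $E_n$. The limit of such a sequence is a free Borel $G$-action on $X$ whose orbit equivalence relation is $\bigcup_n E_n = E$. Two structural facts about $G$ drive the construction: non-discreteness forces perfectness of $G$ (by translation homogeneity), so every non-empty open subset of $G$ has cardinality continuum; non-compactness yields, via Lemma~\ref{lem:disjoint-translates-in-Polish-group}, a symmetric open neighborhood $V$ of the identity such that $VF$ admits infinitely many disjoint right-translates for every finite $F \subseteq G$. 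Fix also a countable dense subset $\{g_j\}_{j \in \mathbb{N}}$ of $G$.

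I would work type by type (rationality yields countably many $E_n$-types per level), defining Borel injections $\Phi_{n,\tau} \colon \phi_n(C_\tau) \to G$ for each $E_n$-model class $C_\tau$ and setting $\psi_n(x) := \Phi_{n,\tau_n(x)}(\phi_n(x))$; the inductive invariant is that $\Psi_{n,\tau} := \Phi_{n,\tau}(\phi_n(C_\tau))$ sits inside $V F_{n,\tau}$ for some finite $F_{n,\tau} \subseteq G$. In the base, $\Phi_{0,\tau}$ is any Borel injection of the interval $\phi_0(C_\tau)$ into $V$. In the inductive step, each $E_{n+1}$-type $\tau'$ has a model class $C$ that decomposes into finitely many $E_n$-sub-classes $D_1, \ldots, D_k$ (of types $\tau_i$ and translations $t_i$) together with a continuum-cardinality gap $C \setminus X_n$ by property~(4). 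Coherence forces $\Phi_{n+1,\tau'}(\phi_n(x) + t_i) = \Phi_{n,\tau_i}(\phi_n(x))\, s_i$ on each $D_i$ for some $s_i \in G$. Applying Lemma~\ref{lem:disjoint-translates-in-Polish-group} to $F := \bigcup_i F_{n,\tau_i}$ produces $h_1, \ldots, h_{k+1}$ with $VF h_1, \ldots, VF h_{k+1}$ pairwise disjoint; setting $s_i := h_i$ makes the sub-class images $\Psi_{n,\tau_i} h_i \subseteq VF h_i$ pairwise disjoint, and the gap is Borel-injected into $V h_{k+1}$, preserving the inductive invariant.

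Coherence, monotonicity, injectivity on each $E_{n+1}$-class, rationality, and Borelness follow from this type-by-type construction. The crux, and the step I expect to be the principal obstacle, is exhaustiveness: for every $x \in X$ and every $g \in G$, some stage must witness $g \in A_n(x) := \Psi_{n,\tau_n(x)}\, \psi_n(x)^{-1}$. As a first step, at stage $n+1$ I would enlarge the gap image to also contain (the portions outside $\bigsqcup_i \Psi_{n,\tau_i} h_i$ of) each $g_j \Psi_{n,\tau_i} h_i$ for $j \le n+1$ and $i \le k$; since these prescribed sets are Borel of cardinality at most continuum and the gap itself has cardinality continuum, this is arrangeable, and it guarantees $\{g_j\}_j \subseteq A(x) := \bigcup_n A_n(x)$ for every $x \in X$. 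Bridging from a dense subset of $G$ to all of $G$ is the delicate final step; I anticipate it requires an additional structural choice, for instance arranging the $\Phi_{n,\tau}$-images to contain open neighborhoods so that every $A_n(x)$ is open, and then using a Baire-category-style argument on the limit set $A(x)$ to conclude that it must coincide with $G$.
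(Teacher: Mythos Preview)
Your overall strategy---use Proposition~\ref{prop:chain-bers-flow}, invoke Lemma~\ref{lem:disjoint-translates-in-Polish-group} for the neighborhood \(V\), fix a countable dense set, and build partial \(G\)-actions \(\psi_{n}\) type by type on the same \((X_{n},E_{n})\)---matches the paper's. The inductive step (place the sub-images into disjoint right translates \(VFh_{i}\), fill the gap using the continuum leftover from item~(4)) is also essentially the paper's move.

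The genuine gap is exactly where you flag it: exhaustiveness. Your proposed argument does not close it. Arranging the images so that each \(A_{n}(x)=\psi_{n}(C_{n})\psi_{n}(x)^{-1}\) is open and contains every \(g_{j}\) from a dense sequence only yields that \(A_{\infty}(x)=\bigcup_{n}A_{n}(x)\) is an open set containing a dense subset of \(G\); a Baire-category argument cannot upgrade ``open and dense'' to ``all of \(G\)''. (An open dense subset of a Polish group is typically a proper subset.) So the ``delicate final step'' as you outline it would fail.

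What the paper does differently is to insist on \emph{surjectivity} at every stage: one chooses \(\psi_{n}\) so that \(\psi_{n}(C)=VF\) exactly (not merely \(\subseteq VF\)) for some finite \(F\) with \(\{h_{j}:j<n\}\subseteq F\), where \((h_{j})_{j}\) is a fixed dense sequence with \(\bigcup_{j}Vh_{j}=G\). The base case uses a Borel \emph{bijection} \(\phi_{0}(C)\to V\) (both sides have cardinality continuum), and the inductive step uses a Borel bijection from the gap \(\phi_{n+1}(C\setminus X_{n})\) onto \(VF\setminus\bigcup_{i}\psi_{n}(D_{i})g_{i}\); the set \(F\) is enlarged freely to absorb \(\{h_{j}:j\le n\}\) and to guarantee this target has cardinality continuum. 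The point is that exhaustiveness is then read off from the \emph{shape} of the image \(VF_{n}\) rather than from a density-plus-topology argument about \(A_{\infty}(x)\). In particular, your weaker invariant \(\Psi_{n,\tau}\subseteq VF_{n,\tau}\) (injections into \(V\) and into \(Vh_{k+1}\)) throws away the control that the paper's equality \(\psi_{n}(C)=VF\) provides; the fix is to replace your injections by bijections and to enlarge \(F\) at each stage so that it contains an initial segment of the dense sequence.
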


\begin{proof}
  Let \((X_{n}, E_{n}, \phi_{n})_{n}\) be a convergent sequence of partial \(\mathbb{R}\)-actions as
  in Proposition~\ref{prop:chain-bers-flow} and let \(V \subseteq G\) be given by
  Lemma~\ref{lem:disjoint-translates-in-Polish-group}. Choose a countable dense \((h_{n})_{n}\) in
  \(G\) so that \(\bigcup_{n}Vh_{n} = G\).  Since the sequence of partial
  \(\mathbb{R}\)-actions is rational, one may pick for each \(n\) a Borel \(E_{n}\)-invariant
  partition \(X_{n} = \bigsqcup_{k}Y_{n,k}\) such that \(\phi_{n}(C) \equiv \phi_{n}(C')\) for all
  \(E_{n}\)-classes \(C,C' \subseteq Y_{n,k}\).  We construct a convergent sequence of partial
  \(G\)-actions \((X_{n}, E_{n}, \psi_{n})_{n}\) such that for each \(n\) and \(k\) there
  exists a finite set \(F \subseteq G\) such that \(\{h_{i} : i < n\} \subseteq F\) and
  \(\psi_{n}(C) = VF\) for all \(E_{n}\)-classes \(C \subseteq Y_{n,k}\).

  For any \(E_{0}\)-class \(C\), both \(\phi_{0}(C) \subseteq \mathbb{R}\) and \(V \subseteq G\) are
  Borel sets of the same cardinality.  We may therefore pick a Borel bijection
  \(f_{k} : \phi_{0}(C) \to V\) where \(C \subseteq Y_{0,k}\).  For the base
  of the inductive construction we set \(\psi_{0}|_{Y_{k}} = f_{k}\circ \phi_{0}\).  Suppose that
  \(\psi_{m} : X_{m} \to G\), \(m \le n\), have been constructed.

  We now construct \(\psi_{n+1}\).  Let \(C\) be an \(E_{n+1}\)-class and let
  \(D_{1}, \ldots, D_{l}\) be a complete list of \(E_{n}\)-classes contained in \(C\). By the
  inductive assumption, there are finite sets \(F_{1}, \ldots, F_{l} \subseteq G\) such that
  \(\psi_{n}(D_{i}) = VF_{i}\).  Let \(\tilde{F} = \bigcup_{i \le l}F_{i}\).  By the choice of
  \(V\), there are elements \(g_{1}, \ldots, g_{l} \in G\) such that \(V\tilde{F}g_{i}\), are
  pairwise disjoint for \(1 \le i \le l\). Pick a finite \(F \subseteq G\) large enough that
  \(\tilde{F}g_{i} \subseteq F\), \(\{h_{i} : i < n+1\} \subseteq F\), and
  \(VF \setminus \bigcup_{i\le l}V\tilde{F}g_{i}\) has cardinality of continuum (the latter can be
  achieved, for instance, by assuring that one more disjoint translate of \(V\tilde{F}\) is inside
  \(VF\)).  Note that
  \(\phi_{n+1}(C \setminus X_{n}) = \phi_{n+1}(C) \setminus \bigcup_{i\le l}\phi_{n+1}(D_{i})\) has
  cardinality of continuum by the properties guaranteed by
  Proposition~\ref{prop:chain-bers-flow}. Pick any Borel bijection
  \[f : \phi_{n+1}(C) \setminus \bigcup_{i\le l}\phi_{n+1}(D_{i}) \to VF \setminus \bigcup_{i}
    \psi_{n}(D_{i})g_{i}\]
  and define \(\psi_{n+1}\) by the conditions
  \(\psi_{n+1}|_{D_{i}} = \psi_{n}|_{D_{i}} \cdot g_{i}\) and
  \(\psi_{n+1}|_{C \setminus \bigcup_{i\le l}D_{i}} = f \circ \phi_{n+1}\).  Just as in the base
  case, the same modification \(f\) works for all classes \(E_{n+1}\)-classes \(C, C'\) such that
  \(\phi_{n+1}(C) \equiv \phi_{n+1}(C')\), which ensures Borelness of the construction.

  It is now easy to check that \((X_{n}, E_{n},\psi_{n})_{n}\) is a convergent sequence of partial
  \(G\)-actions, hence its limit is a free Borel action \(G \acts X\) such that \(E_{G} = E\).
\end{proof}

\begin{remark}
  \label{rem:groups-generating-hyperfinite-relations}
  Theorem~\ref{thm:flow-generated-by-any-Polish-group} highlights difference with actions of
  discrete groups, since a free Borel \(\mathbb{Z}\)-action that preserves a finite measure cannot
  be generated by a free Borel action of a non-amenable group (see, for
  instance,~\cite[Proposition~4.3.3]{zimmerErgodicTheorySemisimple1984} or
  \cite[Proposition~2.5(ii)]{jacksonCountableBorelEquivalence2002}).

  However, if we consider hyperfinite equivalence relations without any finite invariant measures,
  then we do have the analog for \(\mathbb{Z}\)-actions.  There exists a unique up to isomorphism
  non-smooth hyperfinite Borel equivalence relation without any finite invariant measures and it can
  be realized as an orbit equivalence relation of a free action of any infinite countable group
  \cite[Proposition~11.2]{doughertyStructureHyperfiniteBorel1994}.
\end{remark}

%%% Local Variables:
%%% mode: latex
%%% TeX-master: "../Katok-representation-theorem-for-Borel-flows"
%%% End:

\section{Lipschitz Maps}
\label{sec:lipsch-maps}

Our goal in this section is to prove Theorem~\ref{thm:grid-flow}, which shows that any free Borel
\(\mathbb{R}^{d}\)-flow is bi-Lipschitz orbit equivalent to a flow with an integer
grid. Sections~\ref{sec:linked-sets}--\ref{sec:lipschitz-shifts} build the necessary tools to
construct such an orbit equivalence.  Verification of the Lipschitz conditions stated in the lemmas
within these sections is straightforward and routine.  We therefore omit the arguments in the
interest of brevity.

Recall that a map \(f : X \to Y\) between metric spaces \((X,d_{Y})\) and \((Y,d_{Y})\) is
\textbf{\(K\)-Lipschitz} if \(d_{Y}(f(x_{1}), f(x_{2})) \le K d_{X}(x_{1}, x_{2})\) for all
\(x_{1}, x_{2} \in X\), and it is \textbf{\((K_{1},K_{2})\)-bi-Lipschitz} if \(f\) is injective,
\(K_{2}\)-Lipschitz, and \(f^{-1}\) is \(K^{-1}_{1}\)-Lipschitz, which can equivalently be stated as
\[K_{1}d_{X}(x_{1},x_{2}) \le d_{Y}(f(x_{1}), f(x_{2})) \le K_{2} d_{X}(x_{1}, x_{2}) \quad \textrm{
    for all } x_{1},x_{2} \in X.\]
The \textbf{Lipschitz constant} of a Lipschitz map \(f\) is the smallest \(K\) with respect to which
\(f\) is \(K\)-Lipschitz.

\subsection{Linked sets}
\label{sec:linked-sets}

Given two Lipschitz maps \( f : A \to A' \) and \( g : B \to B' \) that agree on the intersection
\( A \cap B \), the map \( f \cup g : A \cup B \to A' \cup B' \), in general, may not be
Lipschitz. The following condition is sufficient to ensure that \( f \cup g \) is Lipschitz with the
Lipschitz constant bounded by the maximum of the constants of \(f\) and \(g\).

\begin{definition}
  \label{def:star-condition}
  Let \( (X, d) \) be a metric space and \( A, B \subseteq X \) be its subsets. We say that \( A \)
  and \( B \) are \textbf{linked} if for all \( x \in A \) and \( y \in B \) there exists
  \( z \in A \cap B \) such that \( d(x,y) = d(x,z) + d(z, y) \).
\end{definition}

\begin{lemma}
  \label{lem:one}
  Let \( (X, d) \) be a metric space, \( f : A \to A' \), \( g : B \to B' \) be \( K \)-Lipschitz
  maps between subsets of \(X\) and suppose that \( f |_{A \cap B} = g |_{A \cap B} \). If
  \( A \) and \( B \) are linked, then \( f \cup g : A \cup B \to A' \cup B' \) is
  \( K \)-Lipschitz.
\end{lemma}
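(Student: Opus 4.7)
The plan is to reduce everything to the one nontrivial case by a case analysis on where the two input points lie. Write $h = f \cup g : A \cup B \to A' \cup B'$; since $f$ and $g$ agree on $A \cap B$, $h$ is well defined. If both inputs $x, y$ lie in $A$, then $d(h(x), h(y)) = d(f(x), f(y)) \le K d(x,y)$ by the Lipschitz property of $f$; symmetrically if both lie in $B$. So the only remaining case is $x \in A \setminus B$ and $y \in B \setminus A$ (or vice versa).

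In that case I would invoke the linked hypothesis to produce a point $z \in A \cap B$ satisfying $d(x,y) = d(x,z) + d(z,y)$. Because $z$ lies in the intersection, $h(z) = f(z) = g(z)$, so I can apply each of $f$ and $g$ separately on the two legs. The triangle inequality in $X$ together with the two $K$-Lipschitz bounds then gives
\[
d\bigl(h(x), h(y)\bigr) \le d\bigl(f(x), f(z)\bigr) + d\bigl(g(z), g(y)\bigr) \le K d(x,z) + K d(z,y) = K d(x,y),
\]
where the final equality is exactly the content of the linked condition. This completes the proof.

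There is really no obstacle here: the whole content of the argument is the observation that, without the linked property, concatenating two $K$-Lipschitz estimates through an intermediate point $z$ only yields $d(h(x), h(y)) \le K (d(x,z) + d(z,y))$, and the right-hand side exceeds $K d(x,y)$ in general by the triangle-inequality slack. The linked hypothesis was engineered precisely to make that slack vanish, so the constant $K$ is preserved. No further structure on $X$, $A$, $B$ is needed.
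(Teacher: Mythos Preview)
Your proof is correct and is exactly the routine verification the paper has in mind; the paper omits the argument entirely, stating that ``verification of the Lipschitz conditions stated in the lemmas within these sections is straightforward and routine.'' Your case analysis and use of the linked condition to eliminate the triangle-inequality slack is the intended argument.
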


Recall that a metric space \((X,d)\) is \textbf{geodesic} if for all points \(x, y \in X\) there
exists a geodesic between them---an isometry \(\tau : \bigl[0, d(x,y)\bigr] \to X\) such that
\(\tau(0) = x\) and \(\tau\bigl(d(x,y)\bigr) = y\). For geodesic metric spaces, closed sets
\( A, B \subseteq X \) are always linked whenever the boundary of one of them is contained in the
other. The boundary of a set \(A\) will be denoted by \(\partial A\), and \(\inter A\) will stand for
the interior of \(A\).

\begin{lemma}
  \label{lem:satisfy-star}
  Suppose \( (X,d) \) is a geodesic metric space. If \( A, B \subseteq X \) are closed and satisfy
  \( \partial A \subseteq B \), then \( A \) and \( B \) are linked.
\end{lemma}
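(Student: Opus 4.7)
The plan is to run the argument along a geodesic from $x$ to $y$ and identify a ``last point in $A$'' which must lie on $\partial A$, hence in $B$.

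Fix $x\in A$ and $y\in B$. First, I would dispose of the trivial cases: if $x\in B$, then $z:=x\in A\cap B$ works since $d(x,y)=0+d(x,y)=d(x,z)+d(z,y)$; symmetrically for $y\in A$. So I may assume $x\in A\setminus B$ and $y\in B\setminus A$.

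Next, using that $(X,d)$ is geodesic, pick an isometry $\tau\colon[0,L]\to X$ with $L=d(x,y)$, $\tau(0)=x$, $\tau(L)=y$. Consider the set $T=\tau^{-1}(A)\subseteq[0,L]$. Since $A$ is closed and $\tau$ continuous, $T$ is closed; it contains $0$ (as $x\in A$) and does not contain $L$ (as $y\notin A$). Let $t^{*}=\sup T$, so $0\le t^{*}<L$ and $\tau(t^{*})\in A$ by closedness.

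Now I would argue $\tau(t^{*})\in\partial A$: by the definition of $t^{*}$, every interval $(t^{*},t^{*}+\epsilon)$ meets $[0,L]\setminus T$, so arbitrarily close to $\tau(t^{*})$ there are points of $X\setminus A$; combined with $\tau(t^{*})\in A$ this places $\tau(t^{*})$ on $\partial A$. By the hypothesis $\partial A\subseteq B$, the point $z:=\tau(t^{*})$ therefore lies in $A\cap B$. Since $\tau$ is an isometric parametrization, $d(x,z)=t^{*}$ and $d(z,y)=L-t^{*}$, so $d(x,z)+d(z,y)=L=d(x,y)$, as required.

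There is no real obstacle here; the only subtlety is to notice that the ``last moment in $A$'' produces a boundary point (which requires the geodesic to genuinely leave $A$ immediately after $t^{*}$, guaranteed by $\tau(L)=y\notin A$), and to dispatch beforehand the degenerate cases $x\in B$ and $y\in A$ where the geodesic argument would not single out a point of $B$.
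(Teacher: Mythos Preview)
Your argument is correct and is exactly the natural one. The paper itself omits the proof of this lemma entirely, remarking that the verifications in this section are ``straightforward and routine'' and left to the reader; your geodesic last-exit argument is precisely the routine verification one would expect.
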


\subsection{Inductive step}
\label{sec:inductive-step}

The following lemma encompasses the inductive step in the construction of the forthcoming
Theorem~\ref{thm:grid-flow}.

\begin{lemma}
  \label{lem:inductive-step-extension}
  Let \((X,d)\) be a geodesic metric space and \(A \subseteq X\) be a closed set.  Suppose
  \((A_{i})_{i=1}^{n}\) are pairwise disjoint closed subsets of \(A\) and
  \(h_{i} : A_{i} \to A_{i}\) are \((K_{1},K_{2})\)-bi-Lipschitz maps such that
  \(h_{i}|_{\partial A_{i}}\) is the identity map for each \(1 \le i \le n\).  The map
  \(g : A \to A\) given by
  \begin{displaymath}
    g(x) =
    \begin{cases}
      h_{i}(x) & \textrm{ if \(x \in A_{i}\)},\\
      x & \textrm{otherwise}
    \end{cases}
  \end{displaymath}
  is \((K_{1},K_{2})\)-bi-Lipschitz.
\end{lemma}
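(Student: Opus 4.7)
The plan is to decompose \(g\) into pieces that agree on their overlaps and then glue them using Lemma~\ref{lem:one} and Lemma~\ref{lem:satisfy-star}. Set \(B_{0} = A \setminus \bigcup_{i=1}^{n} \inter A_{i}\); since \(A\) is closed and each \(\inter A_{i}\) is open, \(B_{0}\) is closed. Because the \(A_{i}\) are closed and pairwise disjoint, \(B_{0} \cap A_{i} = A_{i} \setminus \inter A_{i} = \partial A_{i}\) for each \(i\), and \(g\) satisfies \(g|_{B_{0}} = \mathrm{id}_{B_{0}}\) and \(g|_{A_{i}} = h_{i}\). These agree on \(\partial A_{i}\) by the hypothesis that \(h_{i}|_{\partial A_{i}} = \mathrm{id}\), so the pieces are consistent.

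For the upper Lipschitz constant I would iterate the gluing. The identity on \(B_{0}\) is \(K_{2}\)-Lipschitz (the fact that \(K_{2} \ge 1\) follows from the existence of two distinct points in some \(\partial A_{i}\) fixed by the bi-Lipschitz \(h_{i}\); otherwise the statement for \(K_{2}\) is vacuous on \(B_{0}\)). Since \(\partial A_{1} \subseteq B_{0}\) and both \(B_{0}\) and \(A_{1}\) are closed in the geodesic space \(X\), Lemma~\ref{lem:satisfy-star} tells us they are linked, and Lemma~\ref{lem:one} yields that \(g|_{B_{0} \cup A_{1}}\) is \(K_{2}\)-Lipschitz. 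Continuing by induction on \(k\): the set \(C_{k} := B_{0} \cup A_{1} \cup \dots \cup A_{k}\) is closed and contains \(\partial A_{k+1}\); hence \(C_{k}\) and \(A_{k+1}\) are linked by Lemma~\ref{lem:satisfy-star}, and Lemma~\ref{lem:one} extends the conclusion to \(C_{k+1}\). After \(n\) steps we obtain \(g : A \to A\) is \(K_{2}\)-Lipschitz.

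For the lower bound, observe that \(g\) is a bijection of \(A\): each \(h_{i}\) permutes \(A_{i}\) and \(g\) is the identity on the complement. Its inverse is the map built from the data \((A_{i}, h_{i}^{-1})_{i=1}^{n}\) in exactly the same way as \(g\) was built from \((A_{i}, h_{i})_{i=1}^{n}\). Each \(h_{i}^{-1}\) is \(K_{1}^{-1}\)-Lipschitz and fixes \(\partial A_{i}\) pointwise, so the very same iterative argument shows that \(g^{-1}\) is \(K_{1}^{-1}\)-Lipschitz. Equivalently, \(K_{1}\, d(x,y) \le d(g(x), g(y))\) for all \(x,y \in A\), which together with the upper bound gives the \((K_{1}, K_{2})\)-bi-Lipschitz conclusion.

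There is essentially no hard step once the decomposition is set up: the inductive unions are closed, the boundary containment \(\partial A_{k+1} \subseteq B_{0} \subseteq C_{k}\) is automatic by construction, and both the linking hypothesis and the gluing conclusion are delivered off the shelf by the two preceding lemmas. The only point requiring a moment's care is the symmetric treatment of \(g^{-1}\), which follows because the hypotheses of the lemma are themselves symmetric under \(h_{i} \leftrightarrow h_{i}^{-1}\).
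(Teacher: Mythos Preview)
The paper omits this proof as ``straightforward and routine'', and your argument is exactly what the preceding two lemmas are designed for: the inductive gluing over \(A = B_{0} \cup A_{1} \cup \cdots \cup A_{n}\) via Lemmas~\ref{lem:one} and~\ref{lem:satisfy-star} is the intended route.

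Two small caveats are worth recording. First, your symmetry step for the lower bound asserts that each \(h_{i}\) ``permutes'' \(A_{i}\), i.e., is surjective onto \(A_{i}\); this is not part of the paper's definition of bi-Lipschitz (which requires only injectivity), so strictly speaking \(g^{-1}\) need not be built from the same data \((A_{i}, h_{i}^{-1})\). In every application of the lemma the \(h_{i}\) are the maps of Lemma~\ref{lem:h-is-a-bi-Lipschitz-homeomorphism}, which are homeomorphisms onto their domain, so the argument goes through in context. Second, your observation that \(K_{1}\le 1 \le K_{2}\) follows from two distinct fixed points on some \(\partial A_{i}\) is correct and genuinely needed---without it the lemma fails, as \(X=A=\mathbb{R}\), \(A_{1}=[0,\infty)\), \(h_{1}(x)=2x\) shows---but your ``otherwise vacuous on \(B_{0}\)'' fallback is not accurate, since \(B_{0}\) can have many points even when every \(\partial A_{i}\) is a singleton.
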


\subsection{Lipschitz shifts}
\label{sec:lipschitz-shifts}

Let \( (X, \norm{\cdot}) \) be a normed space and let \(A \subseteq X\) be a closed bounded subset.
We begin with the following elementary and well-known observation regarding Lipschitz perturbations
of the identity map.

\begin{lemma}
  \label{lem:lipschitz-pertrubation}
  If \(\xi : A \to X\) is a \(K\)-Lipschitz map, \( K < 1\), then
  \( A \ni x \mapsto x + \xi(x) \in X\) is \((1-K, 1+K)\)-bi-Lipschitz.
\end{lemma}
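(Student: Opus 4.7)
The plan is to verify the two-sided bound directly from the definition of $\xi$ being $K$-Lipschitz using the triangle inequality and its reverse. Write $f(x) = x + \xi(x)$ and, for arbitrary $x_{1}, x_{2} \in A$, observe that
\[ f(x_{1}) - f(x_{2}) = (x_{1} - x_{2}) + \bigl(\xi(x_{1}) - \xi(x_{2})\bigr). \]
The Lipschitz hypothesis gives $\norm{\xi(x_{1}) - \xi(x_{2})} \le K \norm{x_{1}-x_{2}}$.

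For the upper bound, I would apply the triangle inequality to the displayed identity to obtain
$\norm{f(x_{1}) - f(x_{2})} \le (1+K)\norm{x_{1}-x_{2}}$, which shows that $f$ is $(1+K)$-Lipschitz. For the lower bound, the reverse triangle inequality yields
$\norm{f(x_{1}) - f(x_{2})} \ge \norm{x_{1}-x_{2}} - \norm{\xi(x_{1}) - \xi(x_{2})} \ge (1-K)\norm{x_{1}-x_{2}}$.

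Since $K < 1$, the factor $1-K$ is strictly positive, so the lower bound also establishes injectivity of $f$: if $f(x_{1}) = f(x_{2})$, then $\norm{x_{1} - x_{2}} = 0$. Thus $f$ is a bijection onto its image with $f^{-1}$ satisfying $\norm{f^{-1}(y_{1}) - f^{-1}(y_{2})} \le (1-K)^{-1}\norm{y_{1}-y_{2}}$, which is precisely the $(1-K, 1+K)$-bi-Lipschitz condition as stated in the paper's convention. There is no real obstacle here; the only thing to be careful about is checking that the lower bound simultaneously gives injectivity and the Lipschitz constant of the inverse, so that the two estimates together match the definition of bi-Lipschitz given in Section~\ref{sec:lipsch-maps}.
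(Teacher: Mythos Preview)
Your argument is correct and is exactly the standard triangle-inequality computation one would expect; the paper in fact omits the proof entirely, declaring at the start of Section~\ref{sec:lipsch-maps} that the Lipschitz verifications are ``straightforward and routine.'' So there is nothing to compare against, and your write-up is precisely the intended elementary justification.
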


For the rest of Section~\ref{sec:lipschitz-shifts}, we fix a vector \( v \in X \) and a real
\( K > ||v|| \).  Let the function \( f_{A, K, v} : A \to X \) be given by
\[ f_{A, K, v}(x) = x + \frac{d(x, \partial A)}{K} v, \]
where \(d(x, \partial A)\) denotes the distance from \(x\) to the boundary of \(A\).  This function
(as well as its variant to be introduced shortly) is
\((1-K^{-1}||v||, 1+K^{-1}||v||)\)-bi-Lipschitz.  To simplify the notation, we set
\(\alpha^{+} = 1 + K^{-1}||v||\) and \(\alpha^{-} = 1 - K^{-1}||v||\).

\begin{lemma}
  \label{lem:f-bi-Lipschitz-homeomorphism}
  The function \(f_{A,K,v}\) is an \((\alpha^{-}, \alpha^{+})\)-bi-Lipschitz homeomorphism onto
  \(A\).
\end{lemma}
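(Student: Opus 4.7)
The plan is to extend the formula defining $f_{A,K,v}$ to the whole space $X$, use Lemma~\ref{lem:lipschitz-pertrubation} to obtain the bi-Lipschitz property globally, and then show that both $A$ and $X \setminus A$ are invariant under the extension; together with bijectivity of the extension, this will force $f_{A,K,v}$ itself to be a bijection onto $A$.

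Setting $\xi(x) = \frac{d(x, \partial A)}{K}\,v$ for all $x \in X$ defines a $(K^{-1}\snorm{v})$-Lipschitz perturbation of the identity, since $d(\,\cdot\,, \partial A)$ is $1$-Lipschitz and $\snorm{v} < K$. Lemma~\ref{lem:lipschitz-pertrubation} therefore guarantees that $\tilde{f}(x) = x + \xi(x)$ is $(\alpha^{-}, \alpha^{+})$-bi-Lipschitz on $X$, and its restriction to $A$, which agrees with $f_{A,K,v}$, inherits the same bi-Lipschitz constants.

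The geometric heart of the argument is that, whenever $d(x, \partial A) > 0$, the open ball $B = B\bigl(x, d(x, \partial A)\bigr)$ is disjoint from $\partial A$. Since $X \setminus \partial A = \inter A \sqcup \inter(X \setminus A)$ decomposes into two disjoint open sets and $B$ is connected, $B$ is entirely contained in the component of $X \setminus \partial A$ in which $x$ lies. The estimate $\snorm{\tilde{f}(x) - x} = K^{-1} \snorm{v}\, d(x, \partial A) < d(x, \partial A)$ then places $\tilde{f}(x)$ inside $B$, hence on the same side of $\partial A$ as $x$. Since $\tilde{f}$ fixes $\partial A$ pointwise, this yields both $\tilde{f}(A) \subseteq A$ and $\tilde{f}(X \setminus A) \subseteq X \setminus A$.

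For surjectivity, given any $y \in X$, the map $T_{y}(x) = y - \xi(x)$ is a contraction on $X$ with the same constant $K^{-1}\snorm{v} < 1$, so Banach's fixed point theorem furnishes a unique $x$ with $\tilde{f}(x) = y$. Combined with injectivity, this makes $\tilde{f}$ a bijection of $X$, and the two invariances above then force $\tilde{f}(A) = A$, so that $f_{A,K,v} = \tilde{f}|_{A}$ is the desired bi-Lipschitz homeomorphism. The main subtlety is the ball-and-connectedness step: it is precisely what converts the analytic smallness $\snorm{v} < K$ into the topological statement that $\tilde{f}$ cannot transport a point across $\partial A$, regardless of how irregular $A$ may be.
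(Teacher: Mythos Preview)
The paper omits the proof of this lemma, calling such verifications ``straightforward and routine,'' so there is no argument to compare against. Your proof is sound and supplies exactly the details the paper suppresses: the bi-Lipschitz estimate comes directly from Lemma~\ref{lem:lipschitz-pertrubation}, and your extension to all of \(X\) together with the ball-and-connectedness step is a clean way to upgrade the containment \(f_{A,K,v}(A)\subseteq A\) to equality.

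One minor technicality: your appeal to Banach's fixed point theorem for surjectivity of \(\tilde f\) on \(X\) requires \(X\) to be complete, which the paper does not assume (it only says ``normed space''). This is immaterial in the intended application \(X=\mathbb{R}^d\), but if you want the lemma in its stated generality you can argue surjectivity onto \(A\) directly without leaving \(A\): for \(y\in A\) and \(v\ne 0\), boundedness of \(A\) ensures the ray \(t\mapsto y-\tfrac{t}{K}v\) first exits \(A\) at some finite \(t^*\) with \(y-\tfrac{t^*}{K}v\in\partial A\), and the intermediate value theorem applied to \(t\mapsto d\bigl(y-\tfrac{t}{K}v,\partial A\bigr)-t\) on \([0,t^*]\) produces a zero \(t_0\), whence \(x_0=y-\tfrac{t_0}{K}v\in A\) satisfies \(f_{A,K,v}(x_0)=y\).
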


Fix a real \(L > 0\) and let \( A^{L} = \{x \in A : d(x, \partial A) \ge L \} \) be the set of those
elements that are at least \( L \) units of distance away from the boundary of \(A\).

\begin{lemma}
  \label{lem:props-of-fl}
  \( f_{A,K,v}|_{A^{L}} = f_{A^{L},K,v} + LK^{-1} v \) and
  \( f_{A,K,v}(A^{L}) = A^{L} + LK^{-1}v \).
\end{lemma}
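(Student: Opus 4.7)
The plan is to reduce both assertions to the single geometric identity
\[ d(x, \partial A) = d(x, \partial A^L) + L \quad \text{for every } x \in A^L. \]
Granting this identity, substitution into the definition of $f_{A,K,v}$ gives
\[ f_{A,K,v}(x) = x + K^{-1}\bigl(d(x,\partial A^L) + L\bigr)v = f_{A^L,K,v}(x) + LK^{-1}v \]
for $x \in A^L$, which is the first equality. For the second, note that $A^L$ is closed (as the preimage of $[L,\infty)$ under the $1$-Lipschitz function $d(\,\cdot\,,\partial A)$) and bounded, so Lemma~\ref{lem:f-bi-Lipschitz-homeomorphism} applies to $A^L$ and yields $f_{A^L,K,v}(A^L) = A^L$. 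Combined with the first equality, this gives $f_{A,K,v}(A^L) = A^L + LK^{-1}v$.

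The crux is therefore the distance identity. For the inequality $d(x,\partial A) \le d(x,\partial A^L) + L$, I would first observe that every $w \in \partial A^L$ satisfies $d(w,\partial A) = L$: since $A^L$ is closed, $w \in A^L$ gives $d(w,\partial A) \ge L$, while the fact that $w$ is the limit of a sequence outside $A^L$ (which must eventually lie in $A$ with $d(\,\cdot\,,\partial A) < L$, since $w \notin \partial A$) yields the reverse bound. The triangle inequality then yields $d(x,\partial A) \le d(x,w) + L$ for every $w \in \partial A^L$, and taking the infimum gives the desired bound.

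For the reverse inequality, I would pick a nearest point $y \in \partial A$ to $x$, set $r := d(x,\partial A) = \|x - y\|$, and examine $z := y + (L/r)(x - y)$, the point obtained by walking from $y$ toward $x$ by distance $L$. Then $\|z - y\| = L$ immediately gives $d(z,\partial A) \le L$, while any $w \in \partial A$ with $\|z - w\| < L$ would yield $\|x - w\| \le \|x - z\| + \|z - w\| < (r - L) + L = r$, contradicting the choice of $y$. Hence $z \in \partial A^L$ and $d(x,\partial A^L) \le \|x - z\| = r - L$. (In an infinite-dimensional normed space where a nearest point need not exist, one argues instead with $\varepsilon$-approximate nearest points and lets $\varepsilon \to 0$.) Beyond this elementary piece of normed-space geometry no real obstacle is expected.
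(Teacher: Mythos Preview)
Your reduction to the identity \(d(x,\partial A)=d(x,\partial A^{L})+L\) is correct, and the paper itself omits the proof of this lemma (it declares the verifications in Sections~\ref{sec:linked-sets}--\ref{sec:lipschitz-shifts} ``straightforward and routine''), so there is no approach to compare against.

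One small step you skip: from \(d(z,\partial A)=L\) you jump to \(z\in\partial A^{L}\), but membership in \(A^{L}\) presupposes \(z\in A\), which you have not verified. The fix is immediate: \(x\in\inter A\) and \(z\notin\partial A\) (since \(d(z,\partial A)=L>0\)), so if \(z\notin A\) the segment from \(z\) to \(x\) would cross \(\partial A\) at some point \(p\) with \(\lVert x-p\rVert<\lVert x-z\rVert=r-L<r\), contradicting \(d(x,\partial A)=r\). Once \(z\in A^{L}\), points on the segment from \(z\) toward \(y\) lie outside \(A^{L}\) and witness \(z\in\partial A^{L}\). With this filled in, the argument is complete.
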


A truncated shift function \(h_{A,K,v,L} : A \to X\) is defined by

\begin{displaymath}
  h_{A,K,v,L}(x) =
  \begin{cases}
    f_{A,K,v}(x) & \text{for \(x \in A \setminus A^{L}\)}, \\
    x + LK^{-1} v & \text{for \(x \in A^{L}\)}.

  \end{cases}
\end{displaymath}

\begin{lemma}
  \label{lem:h-is-a-bi-Lipschitz-homeomorphism}
  The function \(h_{A,K,v,L}\) is an \((\alpha^{-}, \alpha^{+})\)-bi-Lipschitz homeomorphism onto
  \(A\).
\end{lemma}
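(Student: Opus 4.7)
The plan is to write $A$ as the union $A = B_{1} \cup A^{L}$, where $B_{1} = \{x \in A : d(x, \partial A) \le L\}$, so that $h_{A,K,v,L}$ agrees with $f_{A,K,v}|_{B_{1}}$ on the first piece and with the translation $x \mapsto x + LK^{-1}v$ on the second piece. The two pieces agree on $B_{1} \cap A^{L} = \{x \in A : d(x, \partial A) = L\}$, since for such $x$ the definition of $f_{A,K,v}$ gives $x + LK^{-1}v$. On $B_{1}$ the map is $(\alpha^{-}, \alpha^{+})$-bi-Lipschitz by Lemma~\ref{lem:f-bi-Lipschitz-homeomorphism} (restrictions of bi-Lipschitz maps inherit the constants), and on $A^{L}$ the map is an isometry. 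In particular, both pieces are $\alpha^{+}$-Lipschitz.

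Next I would verify the linking hypothesis needed to glue Lipschitz constants. The normed space $X$ is geodesic via straight line segments, $B_{1}$ and $A^{L}$ are both closed in $X$, and $\partial A^{L} = \{x \in A : d(x, \partial A) = L\} \subseteq B_{1}$. Lemma~\ref{lem:satisfy-star} then says $B_{1}$ and $A^{L}$ are linked, so Lemma~\ref{lem:one} yields that $h_{A,K,v,L} : A \to X$ is $\alpha^{+}$-Lipschitz.

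To handle the image and the lower bi-Lipschitz bound, I would compute directly that $h_{A,K,v,L}(A) = A$: by Lemma~\ref{lem:props-of-fl}, $h_{A,K,v,L}(A^{L}) = A^{L} + LK^{-1}v = f_{A,K,v}(A^{L})$, while $h_{A,K,v,L}(B_{1}) = f_{A,K,v}(B_{1})$, so the union is $f_{A,K,v}(A) = A$ by Lemma~\ref{lem:f-bi-Lipschitz-homeomorphism}. Injectivity of $h_{A,K,v,L}$ follows because $f_{A,K,v}$ is injective on $A$: if $h(x) = h(y)$ with $x \in B_{1} \setminus A^{L}$ and $y \in A^{L}$, then $f_{A,K,v}(x) = f_{A,K,v}(y)$, forcing $x = y \in B_{1} \cap A^{L}$, a contradiction; the other cases are immediate from injectivity of the two pieces.

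Finally, I would repeat the gluing argument for $h^{-1}$. Its two pieces are the translation $y \mapsto y - LK^{-1}v$ on $f_{A,K,v}(A^{L})$ (which is $1$-Lipschitz, hence $(\alpha^{-})^{-1}$-Lipschitz) and $f_{A,K,v}^{-1}$ on $f_{A,K,v}(B_{1})$ (which is $(\alpha^{-})^{-1}$-Lipschitz by Lemma~\ref{lem:f-bi-Lipschitz-homeomorphism}). These pieces are closed subsets of $A$, agree on their common intersection $f_{A,K,v}(\{d = L\})$, and the boundary of $f_{A,K,v}(A^{L}) = A^{L} + LK^{-1}v$ equals $\{d = L\} + LK^{-1}v$, which is contained in $f_{A,K,v}(B_{1})$; so Lemma~\ref{lem:satisfy-star} applies and Lemma~\ref{lem:one} delivers the $(\alpha^{-})^{-1}$-Lipschitz bound for $h^{-1}$. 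The main subtlety, which is really the only non-routine point, is checking that the image sets $f_{A,K,v}(B_{1})$ and $A^{L} + LK^{-1}v$ still satisfy the boundary containment hypothesis of Lemma~\ref{lem:satisfy-star} after applying $f_{A,K,v}$; but this follows from Lemma~\ref{lem:props-of-fl} because the boundary of the translated set $A^{L} + LK^{-1}v$ is precisely the image of $\partial A^{L}$ under $f_{A,K,v}$.
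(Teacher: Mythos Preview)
The paper does not prove this lemma: at the start of Section~\ref{sec:lipsch-maps} the author declares the verifications in Lemmas~\ref{lem:one}--\ref{lem:h-is-a-bi-Lipschitz-homeomorphism} routine and omits them.  Your decomposition \(A=B_{1}\cup A^{L}\) together with Lemmas~\ref{lem:one}, \ref{lem:satisfy-star}, \ref{lem:f-bi-Lipschitz-homeomorphism}, and~\ref{lem:props-of-fl} is exactly the intended verification, and it goes through.

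There is one small slip in the injectivity step.  You write that \(h(x)=h(y)\) with \(x\in B_{1}\setminus A^{L}\) and \(y\in A^{L}\) yields \(f_{A,K,v}(x)=f_{A,K,v}(y)\); but \(h(y)=y+LK^{-1}v\) equals \(f_{A,K,v}(y)\) only when \(d(y,\partial A)=L\).  The correct line is that \(h(y)\in A^{L}+LK^{-1}v=f_{A,K,v}(A^{L})\) by Lemma~\ref{lem:props-of-fl}, while \(h(x)=f_{A,K,v}(x)\in f_{A,K,v}(B_{1}\setminus A^{L})\); these two images are disjoint since \(f_{A,K,v}\) is injective on \(A\).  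Everything else, including the linking check for \(h^{-1}\) via \(\partial\bigl(A^{L}+LK^{-1}v\bigr)=(\partial A^{L})+LK^{-1}v=f_{A,K,v}(\partial A^{L})\subseteq f_{A,K,v}(B_{1})\), is fine.
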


\subsection{Lipschitz equivalence to grid flows}
\label{sec:application}

The maps \(h_{A,K,v,L}\) can be used to show that any free Borel \(\mathbb{R}^{d}\)-flow is
bi-Lipschitz equivalent to a flow admitting an integer grid.  This is the content of
Theorem~\ref{thm:grid-flow}, but first we formulate the properties of partial actions needed for the
construction. This is an adaption of the so-called unlayered toast construction announced
in~\cite{gaoForcingConstructionsCountable2022}.  The proof given
in~\cite[Appendix~A]{marksBorelCircleSquaring2017} for \(\mathbb{Z}^{d}\)-actions, transfers to
\(\mathbb{R}^{d}\)-flows.

For the rest of the paper, we fix a norm \(|| \cdot ||\) on \(\mathbb{R}^{d}\) and let
\(d(x,y) = ||x-y||\) be the corresponding metric on \(\mathbb{R}^{d}\).  Recall that
\(B_{R}(r) \subseteq \mathbb{R}^{d}\) denotes a closed ball of radius \(R\) centered at
\(r \in \mathbb{R}^{d}\).

\begin{lemma}
  \label{lem:separated-unlayered-toasts}
  Let \(K > 0\) be a positive real. Any free \(\mathbb{R}^{d}\)-flow on a standard Borel space \(X\)
  is a limit of a rational convergent sequence of partial actions \((X_{n}, E_{n}, \phi_{n})_{n}\) (see
  Section~\ref{sec:part-acti-part}) such that for each \(E_{n}\)-class \(C\)
  \begin{enumerate}
  \item \(\phi_{n}(C)\) is a closed and bounded subset of \(\mathbb{R}^{d}\) and
    \(B_{K}(0) \subseteq \phi_{n}(C)\);
  \item the set of \(E_{m}\)-classes, \(m \le n\), contained in \(C\) is finite;
  \item\label{item:far-from-boundary} \(d(\phi_{n}(D), \partial \phi_{n}(C)) \ge K\) for any
    \(E_{m}\)-class \(D \) such that \(D \subseteq C\).
  \end{enumerate}
\end{lemma}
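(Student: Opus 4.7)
The plan is to adapt the unlayered toast construction, whose Borel version for \(\mathbb{Z}^{d}\)-actions appears in \cite[Appendix~A]{marksBorelCircleSquaring2017}, to free Borel \(\mathbb{R}^{d}\)-flows via a rational lacunary cocompact cross-section. First, I would fix a rational \((-\delta,\delta)^{d}\)-lacunary cocompact cross-section \(\mathcal{C} \subseteq X\) with \(\delta\) small relative to \(K\); the existence of such cross-sections is standard (cf.\ the footnote in the proof of Proposition~\ref{prop:chain-bers-flow}). Since \(E_{G}|_{\mathcal{C}}\) is hyperfinite (by the cited result for locally compact Abelian group actions, or directly through a Borel first-return structure), one can treat \(\mathcal{C}\) as carrying an essentially \(\mathbb{Z}^{d}\)-like combinatorial structure coming from the rational jump maps between cross-section points on the same orbit.

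Second, I would apply the Marks--Unger unlayered toast on \(\mathcal{C}\) with a buffer parameter \(K' \gg K\) chosen below. This produces an increasing sequence of Borel sets \(\mathcal{T}_{n} \subseteq \mathcal{C}\) and a monotone sequence of finite Borel equivalence relations \(\tilde{E}_{n}\) on \(\mathcal{T}_{n}\) whose classes are bounded and satisfy the nesting-with-buffer property: whenever \(D\) is a \(\tilde{E}_{m}\)-class contained in an \(\tilde{E}_{n}\)-class \(C\) with \(m \le n\), the \(\mathbb{R}^{d}\)-image \(\phi_{\mathcal{C}}(D)\) of \(D\) under the cross-section partial action sits at distance at least \(K'\) from the image of \(\mathcal{C} \setminus C\). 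The Marks--Unger argument transfers verbatim because it relies only on finite Borel chromatic number bounds of the finite-range structure on \(\mathcal{C}\), which hold just as for \(\mathbb{Z}^{d}\).

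Third, I would inflate these discrete classes to \(\mathbb{R}^{d}\)-tessellation cells. For each \(\tilde{E}_{n}\)-class \(C\), let \(V_{n}(C) \subseteq \mathbb{R}^{d}\) be the union of Voronoi cells (in \(\mathbb{R}^{d}\), within the corresponding orbit parameterization) corresponding to the points of \(C\). Take \(X_{n} \subseteq X\) to be the union over all such \(C\) of the \(\mathbb{R}^{d}\)-preimages of \(V_{n}(C)\), let the \(E_{n}\)-classes be these preimages, and define \(\phi_{n}\) from \(\phi_{\mathcal{C}}\) by re-centering each class so that \(B_{K}(0) \subseteq \phi_{n}(C)\). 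Finiteness of the set of \(E_{m}\)-classes inside each \(E_{n}\)-class follows from finiteness of \(\tilde{E}_{n}\)-classes; rationality of the resulting sequence follows from rationality of \(\mathcal{C}\), since the \(V_{n}(C)\) have countably many shapes up to \(\mathbb{Q}^{d}\)-translation, and this classification can be made Borel and \(E_{n}\)-invariant as in Section~\ref{sec:rati-part-acti}.

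The main obstacle is calibrating parameters so that the combinatorial buffer \(K'\) on \(\mathcal{C}\) yields the required \(\mathbb{R}^{d}\)-metric separation in item~\eqref{item:far-from-boundary}. Since Voronoi cells associated with a \((-\delta,\delta)^{d}\)-lacunary cocompact cross-section have diameter bounded by a multiple of \(\delta\), choosing \(K'\) to exceed \(K\) plus this Voronoi diameter bound ensures that \(\phi_{n}(D) \subseteq V_{n}(D)\) sits at \(\mathbb{R}^{d}\)-distance at least \(K\) from \(\partial V_{n}(C) = \partial \phi_{n}(C)\) whenever \(D \subseteq C\). Once this calibration is fixed, monotonicity and coherence are inherited from the nested toast structure, and exhaustiveness follows from \(\bigcup_{n}\mathcal{T}_{n} = \mathcal{C}\) together with cocompactness; the remaining verifications are routine bookkeeping.
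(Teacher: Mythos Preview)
Your route is sound but differs from the paper's. The paper does not pass through a single fine cross-section and then inflate by Voronoi cells; instead it works directly in \(\mathbb{R}^{d}\) with a \emph{sequence} of increasingly sparse rational cross-sections \(\mathcal{C}_{n}\), taken \(B_{a_{n}}(0)\)-lacunary for a fast-growing sequence \(a_{n}\) and satisfying the density condition \(\forall x\ \forall \epsilon\ \exists^{\infty} n\ \dist(x,\mathcal{C}_{n}) < \epsilon a_{n}\). The level-\(n\) regions start as balls \(B_{a_{n}/10}(0)\) around points of \(\mathcal{C}_{n}\) and are then enlarged by adjoining the \(K\)-fattenings \(E_{m}^{+K}\) of all lower-level classes they meet; the buffer in item~\eqref{item:far-from-boundary} is then automatic from the fattening, and \(B_{K}(0) \subseteq \phi_{n}(C)\) holds because every class contains a ball of radius \(a_{n}/10 \gg K\). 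Your approach has the advantage of invoking the discrete toast more nearly as a black box, at the cost of the calibration step between the combinatorial buffer \(K'\) and the metric buffer \(K\) via Voronoi geometry---precisely the bookkeeping the paper's direct construction sidesteps. Two small points to tighten: the Voronoi-cell diameter is controlled by the cocompactness radius of \(\mathcal{C}\), not by the lacunarity parameter \(\delta\); and ``transfers verbatim'' is a bit optimistic, since \(\mathcal{C}\) carries no canonical free \(\mathbb{Z}^{d}\)-action---what actually transfers is the toast argument run with the ambient \(\mathbb{R}^{d}\)-metric on the Delone set \(\mathcal{C}\), which, once written out, is essentially the paper's construction restricted to \(\mathcal{C}\).
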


Before outlining the proof, we need to introduce some notation. Let \(E_{1}, \ldots, E_{n}\) be
equivalence relations on \(X_{1}, \ldots, X_{n}\) respectively.  By \(E_{1} \vee \cdots \vee E_{n}\)
we mean the equivalence relation \(E\) on \(\bigcup_{i \le n} X_{i}\) generated by \(E_{i}\), i.e.,
\(x E y\) whenever there exist \(x_{1}, \ldots, x_{m}\) and for each \( 1\le i \le m\) there exists
\( 1 \le j(i) \le n\) such that \(x_{1} = x\), \(x_{m} = y\) and \(x_{i} E_{j(i)} x_{i+1}\) for all
\(1 \le i < m\).

If \(E\) is an equivalence relation on \(Y \subseteq X\) and \(K > 0\), we define the relation
\(E^{+K}\) on \(Y^{+K} = \{x \in X : \dist(x, y) \le K \textrm{ for some } y \in Y\}\) by
\(x_{1} E^{+K} x_{2}\) if and only if there are \(y_{1}, y_{2} \in Y\) such that
\(\dist(x_{1},y_{1}) \le K \), \(\dist(x_{2},y_{2}) \le K\) and \(y_{1} E y_{2}\).  Note that in
general, \(E^{+K}\) may not be an equivalence relation if two \(E\)-classes get connected after the
``fattening''. However, \(E^{+K}\) is an equivalence relation if \(\dist(C_{1},C_{2}) > 2K\) holds
for all distinct \(E\)-classes \(C_{1}, C_{2}\).

\begin{proof}[Proof of Lemma~\ref{lem:separated-unlayered-toasts}]
  One starts with a sufficiently fast-growing sequence of radii \(a_{n}\) (say,
  \(a_{n} = K1000^{n+1}\) is fast enough) and chooses using~\cite{boykinBorelBoundednessLattice2007}
  (see also~\cite[Lemma~A.2]{marksBorelCircleSquaring2017}) a sequence of Borel
  \(B_{a_{n}}(0)\)-lacunary cross-sections \(\mathcal{C}_{n} \subseteq X\) such that
  \begin{equation}
    \label{eq:7}
    \forall x \in X\ \forall \epsilon > 0\ \exists^{\infty} n \textrm{ such that }
    \dist(x,\mathcal{C}_{n})  < \epsilon a_{n},
  \end{equation}
  where \(\dist(x, \mathcal{C}_{n}) = \inf\{ \dist(x,c) : c \in \mathcal{C}_{n}\}\) and
  \(\exists^{\infty}\) stands for ``there exist infinitely many''. We may assume without loss of
  generality that cross-sections \(\mathcal{C}_{n}\) are rational in the sense that if
  \(c_{1} + r = c_{2}\) for some \(c_{1}, c_{2} \in \bigcup_{n}\mathcal{C}_{n}\) then
  \(r \in \mathbb{Q}^{d}\).  This can be achieved by moving elements of \(\mathcal{C}_{n}\) by an
  arbitrarily small amount (see \cite[Lemma~2.4]{slutskyTimeChangeEquivalence2019}) which maintains
  the property given in Eq.~\eqref{eq:7}.  Rationality of cross-sections guarantees that the
  sequence of partial actions constructed below is rational.

  One now defines \(X_{n}\) and \(E_{n}\) inductively with the base
  \(X_{0} = \mathcal{C}_{0} + B_{a_{0}/10}(0)\), and \(x E_{0} y\) if and only if there is
  \(c \in \mathcal{C}_{0}\) such that \(x, y \in c + B_{a_{0}/10}(0)\).  For the inductive step,
  begin with \(\tilde{X}_{n} = \mathcal{C}_{n} + B_{a_{n}/10}(0) \) and \(\tilde{E}_{n}\) being
  given analogously to the base case: \(x \tilde{E}_{n} y\) if and only if there is some
  \(c \in \mathcal{C}_{n}\) such that \(\dist(x,c) \le a_{n}/10\) and \(\dist(y,c) \le a_{n}/10\).
  Set \(E'_{n} = \tilde{E}_{n} \vee E^{+K}_{n-1} \vee \cdots \vee E_{0}^{+K}\) and let
  \(X'_{n} = \tilde{X}_{n} \cup \bigcup_{i=0}^{n-1}X_{i}^{+K}\) be the domain of
  \(E'_{n}\). Finally, let \(X_{n}\) be the \(E'_{n}\)-saturation of \(\tilde{X}_{n}\), i.e.,
  \(x \in X_{n}\) if and only if there exists \(y \in \tilde{X}_{n}\) such that \(x E'_{n} y\).  Put
  \(E_{n} = E'_{n}|_{X_{n}}\).

  An alternative description of an \(E_{n}\)-class is as follows. One starts with an
  \(\tilde{E}_{n}\)-class \(C_{n}\) and joins it first with all \(E^{+K}_{n-1}\)-classes \(D\) that
  intersect \(C_{n}\).  Let the resulting \(\tilde{E}_{n}\vee E^{+K}_{n-1}\)-class be denoted by
  \(C_{n-1}\).  Next we add all \(E^{+K}_{n-2}\)-classes that intersect \(C_{n-2}\) producing an
  \(\tilde{E}_{n}\vee E^{+K}_{n-1} \vee E^{+K}_{n-2}\)-class \(C_{n-2}\).  The process terminates
  with an \(E_{n}\)-class \(C_{0}\).

  It is easy to check inductively that the diameter of any \(E_{n}\)-class \(C\) satisfies
  \(\diam(C) \le a_{n}/3\) and therefore \(\dist(C_{1}, C_{2}) \ge a_{n}/3 \gg 2K\) for all distinct
  \(E_{n}\)-classes \(C_{1},C_{2}\) by the lacunarity of \(\mathcal{C}_{n}\).  The latter shows that
  \(E^{+K}_{n}\) is an equivalence relation on \(X_{n}^{+K}\).

  Monotonicity of the sequence \((X_{n},E_{n})_{n}\) is evident from the construction.
  Eq.~\eqref{eq:7} is crucial for establishing the fact that \(\bigcup_{n} X_{n} = X\).  Indeed, for
  each \(x \in X\) there exists some \(n\) such that \(\dist(x, \mathcal{C}_{n}) < a_{n}/10\) and
  thus also \(x \in \tilde{X}_{n} \subseteq X_{n}\).

  The maps \(\phi_{n} : X_{n} \to \mathbb{R}^{d}\), needed to specify partial
  \(\mathbb{R}^{d}\)-actions, are defined by the condition \(\phi_{n}(x)c = x\) for the unique
  \(c \in \mathcal{C}_{n}\) such that \(c E_{n} x\).  Note that
  \(d(\phi_{n}(D), \partial \phi_{n}(C)) \ge K\) for any \(E_{m}\)-class \(D\), \(m < n\), that is
  contained in an \(E_{n}\)-class \(C\) is a consequence of the fact that \(D^{+K} \subseteq C \) by
  the construction.  The convergent sequence of partial actions \((X_{n},E_{n}, \phi_{n})_{n}\)
  therefore satisfies the desired properties.
\end{proof}

Let \(\mathfrak{F}_{1}\) and \(\mathfrak{F}_{2}\) be free \(\mathbb{R}^{d}\)-flows on \(X\) that
generate the same orbit equivalence relation, \(E_{\mathfrak{F}_{1}} = E_{\mathfrak{F}_{2}}\), and
let
\(\rho = \rho_{\mathfrak{F}_{1}, \mathfrak{F}_{2}} : \mathbb{R}^{d} \times X \to \mathbb{R}^{d}\) be
the associated cocycle map, defined for \(x \in X\) and \(r \in \mathbb{R}^{d}\) by the condition
\(x +_{2} r = x +_{1} \rho(r,x)\). We say that the cocycle \(\rho\) is
\((K_{1},K_{2})\)-bi-Lipschitz if such is the map
\(\rho(\,\cdot\, , x) : \mathbb{R}^{d} \to \mathbb{R}^{d}\) for all \(x \in X\):
\begin{equation}
  \label{eq:2}
  K_{1}||r_{2} -r_{1}|| \le ||\rho(r_{2},x) - \rho(r_{1},x)|| \le K_{2} ||r_{2}-r_{1}||.
\end{equation}
Since
\(\rho(r_{2},x) - \rho(r_{1},x) = \rho(r_{2} - r_{1},x +_{1} r_{1})\),
Lipschitz condition~\eqref{eq:2} for a cocycle can be equivalently and more concisely stated as
\begin{equation}
  \label{eq:3}
K_{1} \le \dfrac{||\rho(r,x)||}{||r||} \le K_{2} \quad \textrm{ for all \(x \in X\) and \(r \in
  \mathbb{R}^{d} \setminus \{0\}\)}.
\end{equation}

\begin{remark}
  \label{rem:cocyle-inverse}
  Note that cocycles \(\rho_{\mathfrak{F}_{1}, \mathfrak{F}_{2}}\) and
  \(\rho_{\mathfrak{F}_{2}, \mathfrak{F}_{1}}\) are connected via the identities
\[\rho_{\mathfrak{F}_{1}, \mathfrak{F}_{2}} (\rho_{\mathfrak{F}_{2}, \mathfrak{F}_{1}}(r,x),x) = r
  \qquad \textrm{ and } \qquad \rho_{\mathfrak{F}_{2}, \mathfrak{F}_{1}} (\rho_{\mathfrak{F}_{1},
    \mathfrak{F}_{2}}(r,x),x) = r.\]
In particular, if \(\rho_{\mathfrak{F}_{1}, \mathfrak{F}_{2}}\) is \((K_{1},K_{2})\)-bi-Lipschitz,
then \(\rho_{\mathfrak{F}_{2}, \mathfrak{F}_{1}}\) is \((K_{2}^{-1}, K_{1}^{-1})\)-bi-Lipschitz.
\end{remark}

\begin{definition}
  \label{def:integer-grid}
  Let \(\mathfrak{F}\) be a free \(\mathbb{R}^{d}\)-flow on \(X\). An \textbf{integer grid} for the
  flow \(\mathfrak{F}\) is a \(\mathbb{Z}^{d}\)-invariant Borel subset \(Z \subseteq X\) whose
  intersection with each orbit of the flow is a \(\mathbb{Z}^{d}\)-orbit.  In other words,
  \(Z + \mathbb{R}^{d} = X\), \(Z + \mathbb{Z}^{d} = Z\), and
  \(z_{1}+ \mathbb{Z}^{d} = z_{2} + \mathbb{Z}^{d}\) for all \(z_{1}, z_{2} \in Z\) such that
  \(z_{1}E_{\mathfrak{F}}z_{2}\).
\end{definition}

Not every flow admits an integer grid, but, as the following theorem shows, each flow is bi-Lipschitz
equivalent to the one that does.

\begin{theorem}
  \label{thm:grid-flow}
  Let \(\mathfrak{F}_{1}\) be a free Borel \(\mathbb{R}^{d}\)-flow on \(X\). For any \(\alpha > 1\)
  there exists a free Borel \(\mathbb{R}^{d}\)-flow \(\mathfrak{F}_{2}\) on \(X\) that admits an integer
  grid, induces the sames orbit equivalence as does \(\mathfrak{F}_{1}\), i.e.,
  \(E_{\mathfrak{F}_{1}} = E_{\mathfrak{F}_{2}}\), and whose associated cocycle
  \(\rho_{\mathfrak{F}_{1}, \mathfrak{F}_{2}}\) is \((\alpha^{-1}, \alpha)\)-bi-Lipschitz.
\end{theorem}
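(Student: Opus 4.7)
The plan is to invoke Lemma~\ref{lem:separated-unlayered-toasts} with a parameter $K$ chosen large in terms of $\alpha$ and $d$, producing a rational convergent sequence of partial $\mathbb{R}^d$-actions $(X_n,E_n,\phi_n)_n$ whose limit is $\mathfrak{F}_1$ and in which every sub-class lies at distance at least $K$ from the boundary of its parent class. I will construct, inductively in $n$ and for every $E_n$-class $C$, a bi-Lipschitz self-map $\Phi_n \colon \phi_n(C) \to \phi_n(C)$ fixing $\partial \phi_n(C)$, and then define $\mathfrak{F}_2$ on $C$ by using $\Phi_n \circ \phi_n$ in place of $\phi_n$ as the parameterization. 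The cocycle then satisfies $\rho_{\mathfrak{F}_1,\mathfrak{F}_2}(r,x) = \Phi_n^{-1}(\Phi_n(\phi_n(x)) + r) - \phi_n(x)$ on $\phi_n(C)$, so its bi-Lipschitz constants coincide with those of $\Phi_n$.

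For the inductive step, given $\Phi_{n-1}$ on every $E_{n-1}$-class, first extend it to $\widehat{\Phi}_{n-1}$ on $\phi_n(C)$ by acting as $r_D + \Phi_{n-1}(\,\cdot\, - r_D)$ on each sub-class $\phi_n(D) = r_D + \phi_{n-1}(D)$ and by the identity outside; Lemma~\ref{lem:inductive-step-extension} makes this bi-Lipschitz with the same constants as $\Phi_{n-1}$. Next, define a correction $g_n$ that rigidly translates each $\phi_n(D)$ by the rational vector $v_D := w_D - r_D$, where $r_D := \phi_n(c^D) \in \mathbb{Q}^d$ is the position of the center of $D$ and $w_D \in \mathbb{Z}^d$ is the nearest integer point. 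Realize each such translation via $h_{A_D, K', v_D', L_n}$ from Lemma~\ref{lem:h-is-a-bi-Lipschitz-homeomorphism} on a shell $A_D$ around $\phi_n(D)$ with $A_D^{L_n} = \phi_n(D)$; these shells fit inside $\phi_n(C)$ by Lemma~\ref{lem:separated-unlayered-toasts}(3), and distinct ones are disjoint by the lacunarity inherited from the unlayered toast. Being identity on each $\partial A_D$, the local shifts glue together through Lemmas~\ref{lem:one} and~\ref{lem:satisfy-star} into a single bi-Lipschitz self-map $g_n$ of $\phi_n(C)$ that equals the identity on $\partial \phi_n(C)$, and I set $\Phi_n := g_n \circ \widehat{\Phi}_{n-1}$.

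A straightforward induction then verifies that after stage $n$ every $E_m$-class center with $m<n$ occupies an integer position of $\phi_n(C)$: translating $D$ by $v_D = w_D - r_D$ moves a deeper center located at $r_D + z$, with $z \in \mathbb{Z}^d$ by the inductive hypothesis, to $w_D + z \in \mathbb{Z}^d$. In particular, any two $\mathcal{C}_0$-points of the same orbit differ by an integer vector in the $\mathfrak{F}_2$-parameterization, so $Z := \mathcal{C}_0 +_2 \mathbb{Z}^d$ is a Borel, $\mathbb{Z}^d$-invariant subset of $X$ meeting every orbit in exactly one $\mathbb{Z}^d$-orbit---an integer grid for $\mathfrak{F}_2$. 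Borelness of the entire construction follows from the rationality of $(X_n,E_n,\phi_n)_n$: the pairs consisting of the shape of $\phi_n(C)$ and the positions of its sub-classes fall into countably many translation-equivalence classes, and $g_n$ is defined uniformly across each such class.

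The principal technical obstacle is controlling the cumulative Lipschitz constant. By Lemma~\ref{lem:h-is-a-bi-Lipschitz-homeomorphism}, each $h_{A_D, K', v_D', L_n}$ is $(1-\|v_D\|/L_n,\,1+\|v_D\|/L_n)$-bi-Lipschitz, and these perturbations multiply across the infinitely many levels. Since $\|v_D\|$ is uniformly bounded by the diameter of a fundamental domain for $\mathbb{Z}^d$, the cumulative bound on $\Phi_n$ stays under $\alpha$ provided the shell thicknesses $L_m$ grow fast enough that $\sum_m 1/L_m$ is a sufficiently small multiple of $\log \alpha$. Achieving this requires exploiting the rapid growth of class sizes in Lemma~\ref{lem:separated-unlayered-toasts}---potentially deferring the correction of each $\mathcal{C}_0$-point to a level at which it sits deep enough inside its containing $E_n$-class for an ample shell to be available---so that the geometric constraints (containment and disjointness of shells) are met simultaneously with the analytic condition $\sum 1/L_m < C\log\alpha$. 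This joint tuning of $K$, of the shells $A_D$, and of the level assigned to each correction is the crux of the proof.
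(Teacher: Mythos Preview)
Your construction is essentially the paper's, and the accumulation of Lipschitz constants that you flag as the crux is in fact a non-issue.  The observation you are missing is that your own $g_n$ restricts to a \emph{pure translation} on each $\phi_n(D)$: since you arrange $A_D^{L_n}=\phi_n(D)$, the truncated shift $h_{A_D,K',v_D',L_n}$ acts as $x\mapsto x+v_D$ there.  The support of $\widehat{\Phi}_{n-1}$ lies inside $\bigcup_D\phi_n(D)$ and $\widehat{\Phi}_{n-1}$ fixes each $\partial\phi_n(D)$, so on every shell $A_D$ the composition $\Phi_n=g_n\circ\widehat{\Phi}_{n-1}$ equals $\widehat{\Phi}_{n-1}+v_D$ on $\phi_n(D)$ and equals $g_n$ on $\overline{A_D\setminus\phi_n(D)}$, with the two pieces agreeing on $\partial\phi_n(D)$.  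Lemmas~\ref{lem:satisfy-star} and~\ref{lem:one} then give that $\Phi_n|_{A_D}$ is bi-Lipschitz with the \emph{maximum} of the two constants rather than their product, and Lemma~\ref{lem:inductive-step-extension} glues across the disjoint shells.  By induction the bi-Lipschitz constants of every $\Phi_n$ are bounded by $1\pm R/L$ for any \emph{fixed} shell thickness $L$; choosing $K$ in Lemma~\ref{lem:separated-unlayered-toasts} large enough that $1+R/L<\alpha$ (with, say, $L=K/2$) finishes the proof.  No growing $L_m$, no summability condition, and no deferred corrections are needed.

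The paper runs the same mechanism but places the distortion \emph{inside} each sub-class rather than in an external shell: it uses $h_{\phi_n(D),K,v_D,K}$, which is the identity on $\partial\phi_n(D)$ and a rigid translation by $v_D$ on the $K$-interior $\phi_n(D)^K$.  This sidesteps the need to verify that the external shells around distinct maximal sub-classes are disjoint (in your setup this amounts to checking that such sub-classes are $K$-separated, which does follow from the unlayered-toast construction but is an extra step).  Either way, the point is identical: the level-$n$ correction is a rigid shift on the region carrying all earlier structure, so bi-Lipschitz constants do not compound across levels.
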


\begin{proof}
  Let \(R\) be so big that the ball \(B_{R}(0) \subseteq \mathbb{R}^{d}\) satisfies
  \(\mathbb{Z}^{d} + B_{R}(0) = \mathbb{R}^{d}\).  Choose \(K > 0\) large enough to ensure that
  \(\alpha^{-} = 1 - K^{-1}R > \alpha^{-1}\), and therefore also
  \(\alpha^{+} = 1 + K^{-1}R < \alpha\).  Let \((X_{n}, E_{n}, \phi_{n})_{n}\) be a rational
  convergent sequence of partial actions produced by Lemma~\ref{lem:separated-unlayered-toasts} for
  the chosen value of \(K\).  For an \(E_{n}\)-class \(C\), let \(C'\) denote the collection of all
  \(x \in C\) that are at least \(K\)-distance away from the boundary of \(C\):
  \[C' = \{x \in C : d(\phi_{n}(x), \partial \phi_{n}(C)) \ge K\}.\]
  If \(D\) is an \(E_{m}\)-class such that \(D \subseteq C\), then
  item~\eqref{item:far-from-boundary} of Lemma~\ref{lem:separated-unlayered-toasts} guarantees the
  inclusion \(D \subseteq C'\). Let \(X'_{n} = \bigcup C'\), where the union is taken over all
  \(E_{n}\)-classes \(C\), and set \(E_{n}' = E_{n}|_{X'_{n}}\), \(\phi'_{n}= \phi_{n}|_{X'_{n}}\).
  Note that \((X'_{n}, E'_{n}, \phi'_{n})_{n}\) is a rational convergent sequence of partial actions
  whose limit is the flow \(\mathfrak{F}_{1}\).  The flow \(\mathfrak{F}_{2}\) will be constructed
  as the limit of partial actions \((X'_{n},E'_{n},\psi_{n})\), where maps \(\psi_{n}\) will be
  defined inductively and will satisfy \(\psi_{n}(C') = \phi_{n}(C')\) for all \(E_{n}\)-classes
  \(C\). The sets \(Z_{n} = \psi_{n}^{-1}(\mathbb{Z}^{d})\) will satisfy
  \(Z_{m}\cap X'_{n} \subseteq Z_{n}\) for \(m \le n\), and \(Z = \bigcup_{n}Z_{n}\) will be an
  integer grid for \(\mathfrak{F}_{2}\).

    \begin{figure}[htb]
    \centering
    \begin{tikzpicture}[scale=1.8]
      \draw[rounded corners] (0,0) rectangle (6,4);
      \draw[dashed, rounded corners] (0.4,0.4) rectangle (5.6,3.6);
      \draw[<->] (2,3.6) -- (2,4) node[anchor=west,pos=0.5] {\(K\)};
      \draw (5.8,3.8) node {\(C\)};
      \foreach \x in {0.5, 0.7, ..., 5.6} {
        \foreach \y in {0.5, 0.7, ..., 3.6} {
          \filldraw (\x, \y) circle (0.1pt);
        }
      }
      \begin{scope}
        \draw (1.0,1.0) rectangle (2.4,2.8);
        \foreach \x in {1.4, 1.6, ..., 2.1} {
          \foreach \y in {1.4, 1.6, ..., 2.5} {
            \draw (\x,\y) node[cross=1.5pt,rotate=10] {};
          }
        }
      \end{scope}
      \begin{scope}[xshift=2.2cm,yshift=-0.2cm]
        \draw (1.0,1.0) rectangle (2.4,2.8);
        \foreach \x in {1.4, 1.6, ..., 2.1} {
          \foreach \y in {1.4, 1.6, ..., 2.5} {
            \draw (\x,\y) node[cross=1.5pt,rotate=10] {};
          }
        }
      \end{scope}
      \filldraw[white] (5.4, 3.4) rectangle (5.59, 3.59);
      \filldraw[white] (2.2, 2.6) rectangle (2.39, 2.79);
      \filldraw[white] (4.4, 2.4) rectangle (4.59, 2.59);
      \draw (5.45,3.45) node {\(C'\)};
      \draw (2.25,2.65) node {\(D_{1}\)};
      \draw (4.45,2.45) node {\(D_{2}\)};
    \end{tikzpicture}
    \caption{Construction of the integer grid}
    \label{fig:integer-grid-construction}
  \end{figure}
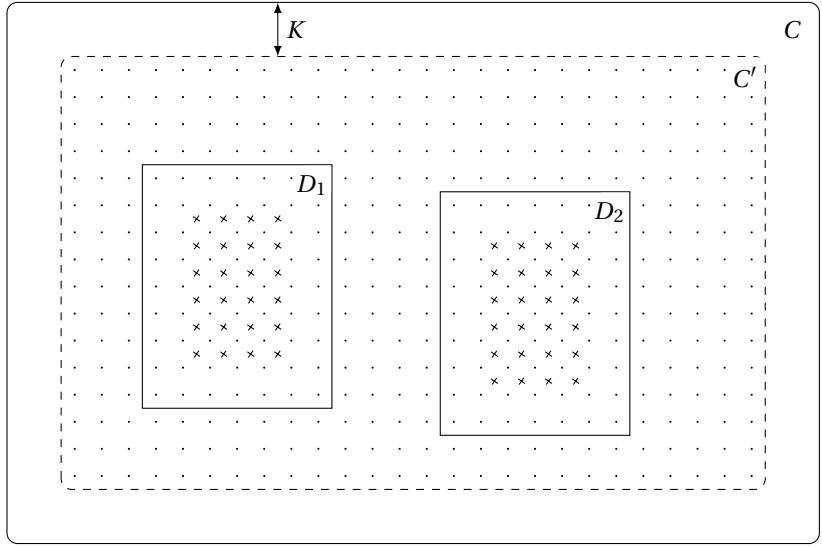

  For the base of the construction, set \(\psi_{0} = \phi'_{0}\) and
  \(Z_{0} = \psi_{0}^{-1}(\mathbb{Z}^{d})\).  Next, consider a typical \(E_{1}\)-class \(C\) with
  \(D_{1}, \ldots, D_{l}\) being a complete list of \(E_{0}\)-classes contained in it (see
  Figure~\ref{fig:integer-grid-construction}).  Consider the set
  \(\tilde{Z}_{C'} = \phi_{1}^{-1}(\mathbb{Z}^{d}) \cap C'\), which is the integer grid inside
  \(C'\) (marked by dots in Figure~\ref{fig:integer-grid-construction}).  Each of the
  \(D_{i}\)-classes comes with the grid
  \(\tilde{Z}_{D'_{i}} = \psi_{0}^{-1}(\mathbb{Z}^{d})\cap D'_{i}\) constructed at the previous
  stage (depicted by crosses in Figure~\ref{fig:integer-grid-construction}).  The coherence
  condition for partial actions guarantees existence of some \(s_{i} \in \mathbb{R}^{d}\),
  \(i \le l\), such that
  \[\phi_{1}(D'_{i})= \phi_{0}(D'_{i}) + s_{i} = \psi_{0}(D'_{i}) + s_{i}.\]

  In general, the grid \(\tilde{Z}_{C'}\) does not contain \(\tilde{Z}_{D'_{i}} \), but for
  each \(i \le l\), we can find a vector \(v_{i} \in \mathbb{R}^{d}\) of norm \(||v_{i}||\le R\)
  such that \(\tilde{Z}_{D'_{i}} +_{1} v_{i} \subseteq \tilde{Z}_{C'}\).  More precisely, we
  take for \(v_{i}\) any vector in \(B_{R}(0)\) such that \(s_{i} + v_{i} \in \mathbb{Z}^{d}\),
  which exists by the choice of \(R\).  Let \(h_{i} : \phi_{1}(D_{i}) \to \phi_{1}(D_{i})\) be the
  function \(h_{\phi_{1}(D_{i}), K, v_{i}, K}\), which is \((\alpha^{-}, \alpha^{+})\)-bi-Lipschitz
  by Lemma~\ref{lem:h-is-a-bi-Lipschitz-homeomorphism}.  Finally, define
  \(g_{1} : \phi_{1}(C') \to \phi_{1}(C')\) to be
  \begin{displaymath}
    g_{1}(r) =
    \begin{cases}
      h_{i}(r) & \textrm{if \(r \in \phi_{1}(D_{i})\)},\\
      r & \textrm{otherwise}.
    \end{cases}
  \end{displaymath}
  Lemma~\ref{lem:inductive-step-extension} has been tailored specifically to show that \(g_{1}\) is
  \((\alpha^{-}, \alpha^{+})\)-bi-Lipschitz.  We set
  \(\psi_{1}|_{C'} = g_{1} \circ \phi_{1}|_{C'}\).   Note that
  \begin{equation}
    \label{eq:6}
    \begin{split}
      \psi_{1}(D'_{i}) &= g_{1} \circ \phi_{1}(D'_{i}) = h_{i} \circ \phi_{1}(D'_{i}) =
                         \phi_{1}(D'_{i}) + KK^{-1}v_{i} \\
                       &= \phi_{0}(D'_{i}) + s_{i} + v_{i} = \psi_{0}(D'_{i}) + s_{i} + v_{i},
    \end{split}
  \end{equation}
  which validates coherence and, in view of \(s_{i} + v_{i} \in \mathbb{Z}^{d}\), gives
  \(\psi^{-1}_{1}(\mathbb{Z}^{d}) \cap D'_{i} = \psi_{0}^{-1}(\mathbb{Z}^{d}) \cap D'_{i} \) for all
  \(i \le l\).

  While we have provided the definition of \(\psi_{1}\) on a single \(E_{1}\)-class \(C\), the same
  construction can be done in a Borel way across all \(E_{1}\)-classes \(C\) using rationality of
  the sequence of partial actions just like we did in
  Theorem~\ref{thm:flow-generated-by-any-Polish-group}. If we let
  \(Z_{1} = \psi_{1}^{-1}(\mathbb{Z}^{d})\), then \(Z_{0} \cap X_{1} \subseteq Z_{1}\) by Eq.~\eqref{eq:6}.

  The general inductive step is analogous.  Suppose that we have constructed maps \(\psi_{k}\) for
  \(k \le n\).  An \(E_{n+1}\)-class \(C\) contains finitely many subclasses
  \(D_{1}, \ldots, D_{l}\), where \(D_{i}\) is an \(E_{m_{i}}\)-class, \(m_{i} < n\), and no \(D_{i}\) is contained
  in a bigger \(E_{m}\)-class for some \(m_{i} < m < n\).  By coherence and inductive assumption,
  there exist \(s_{i} \in \mathbb{R}^{d}\), \(i \le l\), such that
  \[\phi_{n+1}(D'_{i}) = \phi_{m_{i}}(D'_{i}) + s_{i} = \psi_{m_{i}}(D'_{i}) + s_{i}. \]
  Choose vectors \(v_{i} \in B_{R}(0)\) to satisfy \(s_{i} + v_{i} \in \mathbb{Z}^{d}\), set
  \(h_{i} : \phi_{n+1}(D_{i}) \to \phi_{n+1}(D_{i})\) to be \(h_{\phi_{n+1}(D_{i}), K, v_{i}, K}\),
  and define an \((\alpha^{-}, \alpha^{+})\)-bi-Lipschitz function \(g_{n+1}\) by
  \begin{displaymath}
          g_{n+1}(r) =
    \begin{cases}
      h_{i}(r) & \textrm{if \(r \in \phi_{n+1}(D_{i})\)},\\
      r & \textrm{otherwise}.
    \end{cases}
  \end{displaymath}
  Finally, set \(\psi_{n+1}|_{C'} = g_{n+1} \circ \phi_{n+1}|_{C'} \) and extend this definition to
  a Borel map \(\psi_{n+1} : X'_{n+1} \to \mathbb{R}^{d}\) using the rationality of the sequence of
  partial actions.  Coherence of the maps \((\psi_{k})_{k \le n+1}\) and the inclusion
  \(Z_{m}\cap X'_{n+1} \subseteq Z_{n+1}\) for \(m \le n+1\) follow from the analog of
  Eq.~\eqref{eq:6}.

  It remains to check the bi-Lipschitz condition for the resulting cocycle
  \(\rho_{\mathfrak{F}_{1}, \mathfrak{F}_{2}}\).  It is easier to work with the cocycle
  \(\rho_{\mathfrak{F}_{2}, \mathfrak{F}_{1}}\), which for \(x, x+r \in X'_{n}\) satisfies
  \[\rho_{\mathfrak{F}_{2}, \mathfrak{F}_{1}}(r,x) = g_{n}(\phi_{n}(x) + r) - g_{n}(\phi_{n}(x)), \]
  and is therefore \((\alpha^{-}, \alpha^{+})\)-bi-Lipschitz, because so is \(g_{n}\). Hence,
  \(\rho_{\mathfrak{F}_{2}, \mathfrak{F}_{1}}\)is also \((\alpha^{-1}, \alpha)\)-bi-Lipschitz,
  because \(\alpha^{-1} < \alpha^{-} < \alpha^{+} < \alpha\) by the
  choice of \(K\).  Finally, we apply Remark~\ref{rem:cocyle-inverse} to conclude that
  \(\rho_{\mathfrak{F}_{1}, \mathfrak{F}_{2}}\) is also \((\alpha^{-1}, \alpha)\)-bi-Lipschitz.
\end{proof}

Restricting the action of \(\mathfrak{F}_{2}\) onto the integer grid \(Z\), we get the following
corollary.

\begin{corollary}
  \label{cor:existence-of-bi-Lipschitz-grid-aciton}
  Let \(\mathfrak{F}\) be a free Borel \(\mathbb{R}^{d}\)-flow on \(X\). For any \(\alpha > 1\)
  there exist a cross-section \(Z \subseteq X\) and a free \(\mathbb{Z}^{d}\)-action \(T\) on \(Z\)
  such that the cocycle \(\rho = \rho_{\mathfrak{F}, T}: \mathbb{Z}^{d} \times X \to \mathbb{R}^{d}\)
  given by \(T_{n}x = x + \rho(n,x)\) is \((\alpha^{-1}, \alpha)\)-bi-Lipschitz.
\end{corollary}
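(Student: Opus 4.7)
The plan is to invoke Theorem~\ref{thm:grid-flow} directly: apply it to \(\mathfrak{F}_{1} = \mathfrak{F}\) with the given constant \(\alpha > 1\) to obtain a free Borel \(\mathbb{R}^{d}\)-flow \(\mathfrak{F}_{2}\) on \(X\) that generates the same orbit equivalence relation, admits an integer grid \(Z \subseteq X\), and whose cocycle \(\rho_{\mathfrak{F}_{1}, \mathfrak{F}_{2}}\) is \((\alpha^{-1}, \alpha)\)-bi-Lipschitz.  The cross-section and the \(\mathbb{Z}^{d}\)-action demanded by the corollary are then almost immediate: take \(Z\) to be the integer grid itself, and define \(T_{n} z = z +_{2} n\) for \(z \in Z\) and \(n \in \mathbb{Z}^{d}\), where \(+_{2}\) denotes the action of \(\mathfrak{F}_{2}\).

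Next I would verify the structural properties.  Since \(Z\) is \(\mathbb{Z}^{d}\)-invariant under \(\mathfrak{F}_{2}\) (part of the integer-grid definition), \(T\) is a well-defined Borel \(\mathbb{Z}^{d}\)-action on \(Z\), and it is free because \(\mathfrak{F}_{2}\) is.  The defining property \(Z +_{2} \mathbb{R}^{d} = X\) and the fact that \(Z\) meets each \(\mathfrak{F}_{2}\)-orbit in a single \(\mathbb{Z}^{d}\)-orbit ensure that \(Z\) is a Borel cross-section for \(\mathfrak{F}_{2}\), and, since \(E_{\mathfrak{F}_{1}} = E_{\mathfrak{F}_{2}}\), also for \(\mathfrak{F}\).

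For the cocycle, the identity \(T_{n} z = z +_{2} n\) combined with the defining relation \(z +_{2} n = z +_{1} \rho_{\mathfrak{F}_{1}, \mathfrak{F}_{2}}(n, z)\) shows that \(\rho_{\mathfrak{F}, T}(n, z) = \rho_{\mathfrak{F}_{1}, \mathfrak{F}_{2}}(n, z)\) for all \(z \in Z\) and \(n \in \mathbb{Z}^{d}\).  In other words, the cocycle \(\rho_{\mathfrak{F}, T}\) is simply the restriction of \(\rho_{\mathfrak{F}_{1}, \mathfrak{F}_{2}}\) to \(\mathbb{Z}^{d} \times Z\), and the bi-Lipschitz inequalities \eqref{eq:2} transfer from \(\mathbb{R}^{d}\) to its subset \(\mathbb{Z}^{d}\) without change.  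Hence \(\rho_{\mathfrak{F}, T}\) inherits the \((\alpha^{-1}, \alpha)\)-bi-Lipschitz property from Theorem~\ref{thm:grid-flow}.  There is no real obstacle here—everything in the corollary has been built into the theorem—so the only care needed is to unwind the cocycle conventions correctly and confirm that restricting a bi-Lipschitz cocycle to a subgroup of the acting group preserves the Lipschitz constants.
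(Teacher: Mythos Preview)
Your proposal is correct and follows exactly the route the paper takes: the paper's ``proof'' is the single sentence preceding the corollary, namely that one restricts the \(\mathfrak{F}_{2}\)-action from Theorem~\ref{thm:grid-flow} to the integer grid \(Z\). You have simply unpacked that sentence, correctly identifying \(T_{n}z = z +_{2} n\) and observing that \(\rho_{\mathfrak{F},T}\) is the restriction of \(\rho_{\mathfrak{F}_{1},\mathfrak{F}_{2}}\) to \(\mathbb{Z}^{d}\times Z\), from which the bi-Lipschitz bound is inherited.
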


%%% Local Variables:
%%% mode: latex
%%% TeX-master: "../Katok-representation-theorem-for-Borel-flows"
%%% End:

\section{Special representation theorem}
\label{sec:spec-repr-theor}

The main goal of this section is to formulate and prove a Borel version of Katok's special
representation theorem \cite{katokSpecialRepresentationTheorem1977} that connects free
\(\mathbb{R}^{d}\)-flows with free \(\mathbb{Z}^{d}\)-actions. We have already done most of the work
in proving Theorem~\ref{thm:grid-flow}, and it is now a matter of defining special representations
in the Borel context and connecting them to our earlier setup.

\subsection{Cocycles}
\label{sec:cocycles}

Given a Borel action \(G \acts X\), a (Borel) \textbf{cocycle} with values in a group \(H\) is a
(Borel) map \(\rho : G \times X \to H\) that satisfies the \textbf{cocycle identity}:
\[\rho(g_{2}g_{1},x) = \rho(g_{2},g_{1}x) \rho(g_{1},x) \quad \textrm{for all \(g_{1},g_{2} \in G \) and
    \(x \in X\)}.\]
We are primarily concerned with the Abelian groups \(\mathbb{Z}^{d}\) and \(\mathbb{R}^{d}\) in this
section, so the cocycle identity will be written additively.  A cocycle \(\rho : G \times X \to H\)
is said to be \textbf{injective} if \(\rho(g,x) \ne e_{H}\) for all \(g \ne e_{G}\) and all
\(x \in X\), where \(e_{G}\) and \(e_{H}\) are the identity elements of the corresponding groups.
Suppose that furthermore the groups \(G\) and \(H\) are locally compact.  We say that \(\rho\)
\textbf{escapes to infinity} if for all \(x \in X\), \(\lim_{g \to \infty}\rho(g,x) = +\infty\) in
the sense that for any compact \(K_{H} \subseteq H\) there exists a compact \(K_{G} \subseteq G\)
such that \(\rho(g,x) \not \in K_{H}\) whenever \(g \not \in K_{G}\).

\begin{example}
  \label{exmpl:cocycle-from-action}
  Suppose \(a_{H} : H \acts X\) and \(a_{G} : G \acts Y\), \(Y \subseteq X\), are free actions of
  groups \(G\) and \(H\) on standard Borel spaces, and suppose that we have containment of orbit
  equivalence relations \(E_{G} \subseteq E_{H}\). For each \(y \in Y\) and \(g \in G\), there
  exists a unique \(\rho_{a_{H},a_{G}}(g,y) \in H\) such that
  \(a_{H}(\rho_{a_{H},a_{G}}(g,y),y) = a_{G}(g,y) \).  The map
  \((g,y) \mapsto \rho_{a_{H},a_{G}}(g,y)\) is an injective Borel cocycle. We have already
  encountered two instances of this idea in Section~\ref{sec:application}.
\end{example}

\subsection{Flow under a function}
\label{sec:flow-under-function}

Borel \( \mathbb{R} \)-flows and \( \mathbb{Z} \)-actions are tightly connected through the ``flow
under a function'' construction. Let \( T : Z \to Z \) be a free Borel automorphism of a standard
Borel space and \( f : Z \to \mathbb{R}^{>0} \) be a positive Borel function. There is a natural
definition of a flow \( \mathfrak{F} : \mathbb{R} \acts X \) on the space
\( X = \{ (z, t) : z \in Z, 0 \le t < f(z)\} \) under the graph of \( f \). The action
\( (z, t) + r \) for a positive \( r \) is defined by shifting the point \( (z,t) \) by \( r \)
  units upward until the graph of \( f \) is reached, then jumping to the point \((Tz, 0)\), and
  continuing to flow upward until the graph of \(f\) at \(Tz\) is reached, etc. More formally,
\[ (z, t) + r = \bigl(T^{k}z, t + r - \sum\limits_{i=0}^{k-1}f(T^{i}z)\bigr) \]
for the unique \( k \ge 0 \) such that
\( \sum_{i=0}^{k-1}f(T^{i}z) \le t + r < \sum_{i=0}^{k}f(T^{i}z) \); for \( r \le 0 \) the action is
defined by ``flowing backward'', i.e.,
\[ (z, t) + r = \bigl(T^{-k}z, t + r + \sum\limits_{i=1}^{k}f(T^{-i}z)\bigr) \]
for \( k \ge 0 \) such that \( 0 \le t + r + \sum\limits_{i=1}^{k}f(T^{-i}z) < f(T^{-k}z) \).  The
action is well-defined provided that the fibers within the orbits of \( T \) have infinite
cumulative lengths:
\begin{equation}
  \label{eq:suspension-flow-condition}
  \sum_{i=0}^{\infty}f(T^{i}z) = +\infty\quad \textrm{ and }
  \quad \sum_{i=0}^{\infty}f(T^{-i}z) = +\infty\quad \textrm{ for all } z \in Z.
\end{equation}

\medskip

The appealing geometric picture of the ``flow under a function'' does not generalize to higher
dimensions, but admits an interpretation as the so-called special flow construction suggested
in~\cite{katokSpecialRepresentationTheorem1977}.

\subsection{Special flows}
\label{sec:special-flows}

Let \(T\) be a free \(\mathbb{Z}^{d}\)-action on a standard Borel space \(Z\) and let
\(\rho : \mathbb{Z}^{d} \times Z \to \mathbb{R}^{d}\) be a Borel cocycle.  One can construct a
\(\mathbb{Z}^{d}\)-action \(\hat{T}\), the so-called principal \(\mathbb{R}^{d}\)-extension, defined
on \(Z \times \mathbb{R}^{d}\) via \(\hat{T}_{n}(z, r) = (T_{n}z, r + \rho(n,z))\). An easy
application of the cocycle identity verifies axioms of the action.  While the action \(T\) will
typically have complicated dynamics, the action \(\hat{T}\) admits a Borel transversal as long as
the cocycle \(\rho\) escapes to infinity.

\begin{lemma}
  \label{lem:sufficient-condition-smoothness}
  If the cocycle \(\rho\) satisfies \(\lim_{n \to \infty}||\rho(n,z)|| = +\infty\) for all
  \(z \in Z\), then the action \(\hat{T}\) has a Borel transversal.
\end{lemma}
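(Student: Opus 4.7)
The plan is to construct the transversal by picking, in each orbit of \(\hat{T}\), the second coordinate of smallest norm (breaking ties with an auxiliary Borel linear order). The hypothesis that \(\rho(n,z) \to \infty\) guarantees that this minimization procedure is well-defined within each orbit.

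First, I would define
\[ C = \bigl\{(z,r) \in Z \times \mathbb{R}^{d} : \norm{r} \le \norm{r + \rho(n,z)} \textrm{ for all } n \in \mathbb{Z}^{d}\bigr\}, \]
which is Borel as a countable intersection of Borel conditions. I claim that \(C\) meets every \(\hat{T}\)-orbit in a finite, nonempty set. For a point \((z_{0}, r_{0})\), the orbit consists of the points \((T_{n}z_{0}, r_{0} + \rho(n,z_{0}))\) for \(n \in \mathbb{Z}^{d}\), whose second coordinates form the countable set \(S = \{r_{0} + \rho(n,z_{0}) : n \in \mathbb{Z}^{d}\}\). By hypothesis \(\norm{\rho(n,z_{0})} \to \infty\), so for every \(M > 0\) only finitely many \(s \in S\) satisfy \(\norm{s} \le M\); in particular \(\inf_{s \in S}\norm{s}\) is attained and is attained at only finitely many points of \(S\). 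Since the cocycle identity shows that \(S\) does not depend on the choice of base point in the orbit, the orbit meets \(C\) in exactly the finite set corresponding to these minimizers.

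Next, I would reduce to selecting from a finite equivalence class. Fix any Borel linear order \(\preceq\) on \(Z \times \mathbb{R}^{d}\) (obtained, for instance, by transporting the usual order on \([0,1]\) through a Borel isomorphism), and set
\[ \mathcal{T} = \bigl\{(z,r) \in C : (z,r) \preceq (z',r') \textrm{ for every } (z',r') \in C \textrm{ with } (z,r) E_{\hat{T}} (z',r')\bigr\}. \]
The orbit equivalence relation \(E_{\hat{T}}\) is a countable Borel equivalence relation (hence Borel as a set), so \(\mathcal{T}\) is Borel; finiteness of \(C \cap O\) for each orbit \(O\) ensures that the \(\preceq\)-minimum exists and is unique, so \(\mathcal{T}\) is a Borel transversal for \(\hat{T}\).

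The main obstacle is verifying the finiteness of \(C \cap O\) for each orbit \(O\), and this is exactly where the escape-to-infinity hypothesis is used: without it, the infimum of \(\norm{\cdot}\) along an orbit need not be attained, and even if attained it could be attained at infinitely many points, in which case \(C\) would fail to ``smoothen'' the equivalence relation. Once finiteness is in hand, the selector step is standard and uses only that \(Z \times \mathbb{R}^{d}\) is a standard Borel space.
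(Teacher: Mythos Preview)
Your proof is correct and follows essentially the same strategy as the paper: use the escape-to-infinity hypothesis to produce a Borel set meeting every \(\hat{T}\)-orbit in a finite nonempty set, then select via a Borel linear order. The only cosmetic difference is that the paper works with the sets \(Y_{k} = \{(z,r) : \norm{r} \le k\}\) and takes, for each orbit, the points lying in \(Y_{k}\) for the least \(k\) that the orbit enters, whereas you go straight to the points realizing the exact minimum of \(\norm{r}\) along the orbit; both devices yield the same reduction to a finite Borel equivalence relation.
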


\begin{proof}
  Let \(Y_{k} = \{ (z, r) \in Z \times \mathbb{R}^{d} : ||r|| \le k\}\).  We claim that each orbit
  of \(\hat{T}\) intersects \(Y_{k}\) in a finite (possibly empty) set.  Indeed, cocycle values
  escaping to infinity yield for any \((z,r) \in Z \times \mathbb{R}^{d}\) a number \(N\) so
  large that \(||r + \rho(n,z)|| > k\) whenever \(||n|| \ge N\).  In particular, \(||n|| \ge N\)
  implies \(\hat{T}_{n}(z,r) = (T_{n}z, r + \rho(n,z)) \not \in Y_{k}\).  Hence, the intersection of
  the orbit of \((z,r)\) with \(Y_{k}\) is finite.

  Set
  \(Y = \bigsqcup_{k \in \mathbb{N}} \bigl(Y_{k} \setminus \bigcup_{n \in \mathbb{Z}^{d}}
  \hat{T}_{n}Y_{k-1}\bigr)\).  Each orbit of \(\hat{T}\) intersects \(Y\) in a finite and
  necessarily non-empty set, so \(E_{\hat{T}}|_{Y}\) is a finite Borel equivalence relation. A Borel
  transversal for \(E_{\hat{T}}|_{Y}\) is also a transversal for the action of \(\hat{T}\).
\end{proof}

We assume now that the cocycle \(\rho\) satisfies the assumptions of
Lemma~\ref{lem:sufficient-condition-smoothness}, and \(X = (Z \times \mathbb{R}^{d})/E_{\hat{T}}\)
therefore carries the structure of a standard Borel space as a push-forward of the factor map
\(\pi : Z \times \mathbb{R}^{d} \to X\), which sends a point to its \(E_{\hat{T}}\)-equivalence
class.

There is a natural \(\mathbb{R}^{d}\)-flow \(\hat{\mathfrak{F}}\) on \(Z \times \mathbb{R}^{d}\)
which acts by shifting the second coordinate: \((z,r) +_{\hat{\mathfrak{F}}} s = (z, r + s)\). This
flow commutes with the \(\mathbb{Z}^{d}\)-action \(\hat{T}\) and therefore projects onto the flow
\(\mathfrak{F}\) on \(X\) given by the condition
\(\pi((z,r) +_{\hat{\mathfrak{F}}} s) = \pi(z,r) +_{\mathfrak{F}} s\). We say that \(\mathfrak{F}\)
is the \textbf{special flow} over \(T\) generated by the cocycle \(\rho\).  Freeness of \(T\)
implies freeness of \(\mathfrak{F}\).

The construction outlined above, works just as well in the context of ergodic theory, where the
space \(Z\) would be endowed with a finite measure \(\nu\) preserved by the action \(T\).  The
product of \(\nu\) with the Lebesgue measure on \(\mathbb{R}^{d}\) induces a measure \(\mu\) on
\(X\), which is preserved by the flow \(\mathfrak{F}\).  Furthermore, \(\mu\) is finite provided the
cocycle \(\rho\) is integrable in the sense of \cite[Condition (J),
p.~122]{katokSpecialRepresentationTheorem1977}.  Katok's special representation theorem asserts
that, up to a null set, any free ergodic measure-preserving flow can be obtained via this
process. Furthermore, the cocycle can be picked to be bi-Lipschitz with Lipschitz constants
arbitrarily close to \(1\).

As will be shown shortly, such a representation result continues to hold in the framework of
descriptive set theory, and every free Borel \(\mathbb{R}^{d}\)-flow is Borel isomorphic to a
special flow over some free Borel \(\mathbb{Z}^{d}\)-action.  Moreover, just as in Katok's original
work, Theorem~\ref{thm:katoks-special-representation-flows} provides some significant control on the
cocycle that generates the flow, tightly coupling the dynamics of the \(\mathbb{Z}^{d}\)-action with
the dynamics of the flow it produces.  But first, we re-interpret the construction in different
terms.

\subsection{Flows generated by admissible cocycles}
\label{sec:flows-gener-admiss}

Let the map \( Z \ni z \mapsto (z,0) \in Z \times \{0\}\) be denoted by \(\iota\). If the cocycle
\(\rho\) is injective, then \(\pi \circ \iota : Z \to \pi(Z\times\{0\}) = Y\) is a bijection and
\(Y\) intersects every orbit of \(\mathfrak{F}\) in a non-empty countable set.  The
\(\mathbb{Z}^{d}\)-action \(T\) on \(Z\) can be transferred via \(\pi \circ \iota\) to give a free
\(\mathbb{Z}^{d}\)-action \(T' = \pi \circ \iota \circ T \circ \iota^{-1} \circ \pi^{-1}\) on \(Y\).
Let \(\rho' = \rho_{T', \mathfrak{F}}: \mathbb{Z}^{d} \times Y \to \mathbb{R}^{d}\) be the cocycle
of the action \(\pi \circ \iota \circ T \circ \iota^{-1} \circ \pi^{-1}\); in other words
\begin{equation}
  \label{eq:1}
  T'_{n}(y) = (\pi \circ
\iota \circ T_{n} \circ \iota^{-1} \circ \pi^{-1})(y) = y +_{\mathfrak{F}} \rho'(n,y) \quad \textrm{
for all \(n \in \mathbb{Z}^{d}\) and \(y \in Y\)}.
\end{equation}
If \(y =  (\pi \circ \iota)(z)\) for \(z \in Z\), then Eq.~\eqref{eq:1} translates into
\[\pi(T_{n}z, 0) = \pi(z, \rho'(n,y)).\]
Since \(\pi(T_{n}z, 0) = \pi(z, \rho(-n, T_{n}z)) = \pi(z, -\rho(n,z))\), we conclude that
\(\rho'(n,y) = - \rho(n,z)\), where \(y = (\pi \circ \iota) (z)\). In particular, \(Y\) is a
discrete cross-section for the flow \(\mathfrak{F}\) precisely because \(\rho\) escapes to infinity.

Conversely, if \(\mathfrak{F}\) is any free \(\mathbb{R}^{d}\)-flow on a standard Borel space \(X\),
and \(Z \subseteq X\) is a discrete cross-section with a \(\mathbb{Z}^{d}\)-action \(T\) on it, then
\(\mathfrak{F}\) is isomorphic to the special flow over \(T\) generated by the (necessarily
injective) cocycle \(-\rho_{T,\mathfrak{F}}\).

Let us say that a cocycle \(\rho\) is \textbf{admissible} if it is both injective and escapes to
infinity. The discussion of the above two paragraphs can be summarized by saying that, up to a
change of sign in the cocycles, representing a flow as a special flow generated by an
admissible cocycle is the same thing as finding a free \(\mathbb{Z}^{d}\)-action on a discrete
cross-section of the flow.

For instance, given any free \(\mathbb{Z}^{d}\)-action \(T\) on \(Z\), we may consider the admissible
cocycle \(\rho(n, z) = -n\) for all \(z \in Z\) and \(n \in \mathbb{Z}^{d}\). The set
\(Y = \pi(Z \times \{0\})\) is then an integer grid for the flow \(\mathfrak{F}\) (in the sense of
Definition~\ref{def:integer-grid}).  Conversely, any flow that admits an integer grid is isomorphic
to a special flow generated by such a cocycle.

\subsection{Special representation theorem}
\label{sec:spec-repr-theorem}

Restriction of the orbit equivalence relation of any \(\mathbb{R}^{d}\)-flow onto a cross-section
gives a hyperfinite equivalence relation \cite[Theorem~1.16]{jacksonCountableBorelEquivalence2002},
and therefore can be realized as an orbit equivalence relation by a free Borel
\(\mathbb{Z}^{d}\)-action (as long as the restricted equivalence relation is aperiodic).  Since any
free flow admits a discrete (in fact, lacunary) aperiodic cross-section, it is isomorphic to a
special flow over \emph{some} action generated by \emph{some} cocycle. In general, however, the
structure of the \(\mathbb{Z}^{d}\)-orbit and the corresponding orbit of the flow have little to do
with each other. Theorem~\ref{thm:grid-flow} and
Corollary~\ref{cor:existence-of-bi-Lipschitz-grid-aciton} allow us to improve on this and find a
special representation generated by a bi-Lipschitz cocycle.

For comparison, Katok's theorem \cite{katokSpecialRepresentationTheorem1977} can be formulated in
the parlance of this section as follows.

\begin{theorem-nn}[Katok]
  \label{thm:katok-special-representation-flows-ergodic-theory}
  Pick some \(\alpha > 1\).  Any free ergodic measure-preserving \(\mathbb{R}^{d}\)-flow on a
  standard Lebesgue space is isomorphic to a special flow over a free ergodic measure-preserving
  \(\mathbb{Z}^{d}\)-action generated by an \((\alpha^{-1},\alpha)\)-bi-Lipschitz cocycle.
\end{theorem-nn}

As is the case with all ergodic theoretical results, isomorphism is understood to hold up to a set
of measure zero.  We may now conclude with a Borel version of Katok's special representation
theorem, which holds for all free Borel \(\mathbb{R}^{d}\)-flows and establishes isomorphism on all
orbits.

\begin{theorem}
  \label{thm:katoks-special-representation-flows}
  Pick some \(\alpha > 1\).  Any free Borel \(\mathbb{R}^{d}\)-flow is isomorphic to a special flow
  over a free Borel \(\mathbb{Z}^{d}\)-action generated by an \((\alpha^{-1},\alpha)\)-bi-Lipschitz
  cocycle.
\end{theorem}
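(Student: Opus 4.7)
The plan is to combine Corollary~\ref{cor:existence-of-bi-Lipschitz-grid-aciton} with the dictionary developed in Section~\ref{sec:flows-gener-admiss} between special flows generated by admissible cocycles, on the one hand, and free \(\mathbb{Z}^{d}\)-actions on discrete cross-sections of the flow, on the other. Essentially all of the analytic work has already been carried out in the construction of bi-Lipschitz integer grids in Section~\ref{sec:lipsch-maps}, so the argument becomes a matter of translation.

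Concretely, given a free Borel \(\mathbb{R}^{d}\)-flow \(\mathfrak{F}\) on \(X\) and a real \(\alpha > 1\), I would first invoke Corollary~\ref{cor:existence-of-bi-Lipschitz-grid-aciton} to obtain a cross-section \(Z \subseteq X\) and a free Borel \(\mathbb{Z}^{d}\)-action \(T\) on \(Z\) whose associated cocycle \(\rho_{\mathfrak{F},T}\) is \((\alpha^{-1},\alpha)\)-bi-Lipschitz. I would then form the candidate cocycle for the special representation by setting \(\rho(n,z) = -\rho_{T,\mathfrak{F}}(n,z)\), following the sign convention identified in Section~\ref{sec:flows-gener-admiss}.

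Next, I would verify that \(\rho\) is admissible. Injectivity is immediate from freeness of \(T\): if \(\rho(n,z) = 0\) for some \(n \in \mathbb{Z}^{d}\) and \(z \in Z\), then \(T_{n}z = z\), forcing \(n = 0\). Escape to infinity is a direct consequence of the lower bi-Lipschitz bound \(\snorm{\rho(n,z)} \ge \alpha^{-1}\snorm{n}\), which also implies that \(Z\) is automatically a discrete cross-section of \(\mathfrak{F}\). The bi-Lipschitz bounds themselves are preserved under the sign change, so \(\rho\) is \((\alpha^{-1},\alpha)\)-bi-Lipschitz.

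Finally, I would appeal to the characterization at the end of Section~\ref{sec:flows-gener-admiss}: any free flow equipped with a discrete cross-section carrying a free Borel \(\mathbb{Z}^{d}\)-action is Borel isomorphic to the special flow over that action generated by the corresponding admissible cocycle. Applying this to \((Z,T,\rho)\) yields the desired isomorphism of \(\mathfrak{F}\) with a special flow over a free Borel \(\mathbb{Z}^{d}\)-action with \((\alpha^{-1},\alpha)\)-bi-Lipschitz generating cocycle. The only potential hazard in the writeup is bookkeeping of sign and subscript conventions between \(\rho_{\mathfrak{F},T}\) and \(\rho_{T,\mathfrak{F}}\); there is no substantive obstacle remaining, as the theorem is effectively a reformulation of Corollary~\ref{cor:existence-of-bi-Lipschitz-grid-aciton} in the language of Section~\ref{sec:special-flows}.
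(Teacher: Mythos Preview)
Your proposal is correct and follows the paper's own proof essentially line for line: apply Corollary~\ref{cor:existence-of-bi-Lipschitz-grid-aciton} to get the cross-section, the free \(\mathbb{Z}^{d}\)-action, and the \((\alpha^{-1},\alpha)\)-bi-Lipschitz cocycle, then invoke the dictionary of Section~\ref{sec:flows-gener-admiss} to recognize \(\mathfrak{F}\) as the special flow over \(T\) generated by the negated cocycle. The only thing to tidy up is exactly the subscript bookkeeping you flagged---Corollary~\ref{cor:existence-of-bi-Lipschitz-grid-aciton} names the cocycle \(\rho_{\mathfrak{F},T}\), so the generating cocycle should be written \(-\rho_{\mathfrak{F},T}\) rather than \(-\rho_{T,\mathfrak{F}}\).
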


\begin{proof}
  Let \(\mathfrak{F}\) be a free Borel \(\mathbb{R}^{d}\)-flow on
  \(X\). Corollary~\ref{cor:existence-of-bi-Lipschitz-grid-aciton} gives a cross-section
  \(Z \subseteq X\) and a \(\mathbb{Z}^{d}\)-action \(T\) on it such that the cocycle
  \(\rho_{\mathfrak{F},T} : \mathbb{Z}^{d} \times X \to \mathbb{R}^{d}\) is
  \((\alpha^{-1}, \alpha)\)-bi-Lipschitz.  By the discussion in
  Section~\ref{sec:flows-gener-admiss}, this gives a representation of the flow as a special flow
  over \(T\) generated by the cocycle \(-\rho_{\mathfrak{F},T}\), which is also
  \((\alpha^{-1},\alpha)\)-bi-Lipschitz.
\end{proof}

%%% Local Variables:
%%% mode: latex
%%% TeX-master: "../Katok-representation-theorem-for-Borel-flows"
%%% End:

\bibliography{refs.bib}
\bibliographystyle{amsplain}

\end{document}